\documentclass[]{article}


\usepackage{amsfonts}

\usepackage{amsmath}

\usepackage{amssymb}

\usepackage{amstext}

\usepackage{amsthm}

\usepackage{bbm}

\usepackage{bussproofs}

\usepackage{color}

\usepackage{enumitem}

\usepackage{epigraph}

\usepackage{footmisc}

\usepackage{graphicx}

\usepackage{mathbbol}

\usepackage[mathcal]{euscript}

\usepackage[colorlinks=true,citecolor=blue]{hyperref}

\usepackage{doi}

\usepackage{mathrsfs}

\usepackage{stmaryrd} 

\usepackage[all]{xy}
\xyoption{2cell}
\xyoption{curve}
\UseAllTwocells
\SelectTips{cm}{}

\usepackage{geometry}
 \geometry{
 a4paper,
 left=20mm,
 right=20mm,
 top=30mm,
 bottom=20mm,
 }

\usepackage{url}


\theoremstyle{plain}
\newtheorem{theorem}{Theorem}[section]
\newtheorem{proposition}[theorem]{Proposition}
\newtheorem{corollary}[theorem]{Corollary}
\newtheorem{lemma}[theorem]{Lemma}

\theoremstyle{definition}
\newtheorem{definition}[theorem]{Definition}
\newtheorem{remark}[theorem]{Remark}
\newtheorem{example}[theorem]{Example}

\newtheorem{our-remark}[theorem]{Remark}


\newcommand{\V}{\mathcal V}


\def\vcat{\V\hspace{-1.5pt}\mbox{-}\hspace{.5pt}{\mathbf{Cat}}}

\def\vdist{\V\hspace{-0.5pt}\mbox{-}\hspace{.9pt}{\mathbf{Dist}}}

\newcommand{\vsup}{\V\hspace{0pt}\mbox{-}\hspace{.5pt}\mathbf{Sup}}
\newcommand{\vbisup}{\V\hspace{0pt}\mbox{-}\hspace{.5pt}\mathbf{BiSup}}

\newcommand{\vccd}{\V\hspace{0pt}\mbox{-}\hspace{.5pt}\mathbf{ccd}}


\def\kat#1{{\mathscr{#1}}}
\def\X{\kat{X}}
\def\Y{\kat{Y}}
\def\Z{\kat{Z}}

\def\A{\kat{A}}
\def\B{\kat{B}}
\def\C{\kat{C}}



\newcommand{\set}{\mathbf{Set}}

\newcommand{\ord}{\mathbf{Ord}}

\newcommand{\twosup}{\mathbf{Sup}}

\newcommand{\two}{\mathbbm 2}
\newcommand{\one}{\mathbbm 1}

\def\twocat{\two\hspace{-0.5pt}\mbox{-}\hspace{.5pt}{\mathbf{Cat}}}


\def\colim#1#2{{\mathsf{colim}_{#1}{#2}}}

\def\sup#1#2{{\mathsf{sup}_{#1}{#2}}}

\newcommand{\id}{\mathsf{id}}

\newcommand{\op}{^{\mathsf{op}}}

\newcommand{\sep}{_{\mathsf{sep}}}

\newcommand{\Lan}{\mathsf{Lan}}



\renewcommand{\d}{\mathsf{d}} 

\newcommand{\D}{\mathbbm D}

\newcommand{\U}{\mathbbm U}



\newcommand{\y}{\mathsf{y}}

\renewcommand{\t}{\mathsf{t}}


\newcommand{\ot}{\otimes}
\def\td{\ot_\V}




\newcommand{\nto}{\nrightarrow}

\renewcommand{\phi}{\varphi}


\begin{document}

\title{On the tensor product of completely distributive quantale-enriched categories}
                       
\author{Adriana Balan\thanks{Department of Mathematical Methods and Models, Fundamental Sciences Applied in Engineering Research Center, National University of Science and Technology POLITEHNICA of Bucharest, RO-060042, Bucharest, Romania. \linebreak {\tt adriana.balan@upb.ro}}\thanks{The author acknowledges support by the research project 88/11.10.2023 GNaC2023 ARUT.}}
%
%

\maketitle      


\begin{abstract}
Tensor products are ubiquitous in algebra, topology, logic and category theory. 
The present paper explores the monoidal structure of the category $\vsup$ of separated cocomplete enriched categories over a commutative quantale $\V$, the many-valued analogue of complete sup-lattices.  
We recover the known result that $\vsup$ is $*$-autonomous and we show that the nuclear/dualizable objects in $\vsup$ are precisely the completely distributive cocomplete $\V$-categories. 

\end{abstract}

\section{Introduction}


Quantitative reasoning is often encountered in Computer Science, in various contexts and guises: 
in the theory of dynamical systems featuring commensurable data, such as probabilistic transition systems~\cite{BreugelWorell2005}, 
or via behavioural distances substituting program equivalences~\cite{MardarePanangadenPlotkin2016}, in Formal Concept Analysis~\cite{Belohlavek2004}, and the examples could go further on. 
Their common feature is the semantic interpretation in domains having a metric-like structure -- that is, quantales, which generalise both truth values and distances, resulting in a rich categorical structure: cocomplete quantale enriched categories~\cite{Stubbe2005}. 
In this paper we shall bring forward their symmetric tensor product~\cite{JoyalTierney1984}, in a presentation featuring elements of enriched category theory, Kock-Z\"oberlein monads, Galois connections and Shmuely's G-ideals. We shall also relate dualizability and complete distributivity in the category of separated cocomplete quantale-enriched categories.

\medskip

The tensor product of complete sup-lattices has a long history (see for example~\cite{BanaschewskiNelson1976,Mowat1968,Shmuely74} and references therein). Essentially, the theory developed along two main lines of research: first, the description of the tensor product as Galois maps, and second, as certain downsets separately preserving suprema (called G-ideals in~\cite{Shmuely74}). Each description allowed for various generalisations to arbitrary posets, classes of lattices, closure spaces or formal contexts. 

\medskip

Quantales were introduced in the '80s as a logical-theoretic framework for studying certain spaces arising from quantum mechanics~\cite{Mulvey1986}. 
As mentioned above, these can be perceived both as lattices of ``truth-values'' and as ``distances'', equipped with an extra operation expressing conjunction (logical interpretation) or addition of distances (metric interpretation). 

\medskip

Lawvere observed that quantales provided a unified setting for both ordered sets and metric spaces as enriched categories~\cite{Lawvere73}. His insight significantly enhanced the  quantitative theory of domains~\cite{Stubbe2007a,Wagner1994} mentioned in the beginning with  concepts and ideas from category theory. Among the most useful and pleasing properties of (quantale-)enriched categories are undoubtedly (co)completeness with respect to certain classes of (co)limits and commutation of such limits and colimits (with prominent examples featuring the ordered case, like frames, continuous lattices or completely distributive lattices). 
For $\V$ a commutative quantale, the category $\vsup$ of separated cocomplete quantale-enriched categories and cocontinuous functors~\cite{JoyalTierney1984,PedicchioTholen89,Stubbe2005} generalises complete sup-lattices to the many-valued realm. 
That $\vsup$ is a $*$-autonomous category with tensor product $\A\td \B$ classifying bimorphisms ($\V$-functors $\A \otimes\B \to \C$ cocontinuous in each variable separately) is an old result, going back to~\cite{JoyalTierney1984}, which has recently seen a revival of interest~\cite{EklundGutierrez-GarciaHohleKortelainen2018,Tholen2024}. 
In the former reference, the classical construction of the tensor product of modules over a commutative ring is applied {\em mutatis mutandis} to modules over a monoid in the category of complete sup-lattices, that is, over a quantale. 
In the latter two references, the emphasis is on the underlying category of complete sup-lattices, respectively on the equational presentation of cocomplete enriched categories as endowed with an action of the quantale. 

\medskip

The present paper contributes to the above mentioned results with yet another description of the tensor product, relying on quantale-enriched categories techniques: KZ-monads, enriched limits and colimits, left Kan extensions are the main tools which build the path for our results. 
Although certainly more abstract than the previous cited papers, the categorical approach has the advantage that it can suit also other purposes, like generalization to quantaloid-enriched categories, dropping thus the commutativity assumption on the quantale.  

\medskip

Section~\ref{sec:Vsup} of the paper was influenced by the results of~\cite{RosebrughWood1994}. 

\paragraph{Structure of the paper.} 
In Section~\ref{sec:prelim} we recall the elements of quantale-enriched category theory, distributors, colimits and cocompleteness as used in this paper. 
The next section is devoted to the free cocompletion monad, where we recollect its main features to be used subsequently. 
Sections~\ref{sec:Vsup} and~\ref{sec:Vccd} contain the main results: the construction and properties of the tensor product of separated cocomplete quantale-enriched categories, that the tensor product restricts to the full subcategory of completely distributive cocomplete quantale-enriched categories, and that the objects of the latter category are precisely the nuclear/dualizable cocomplete quantale-enriched categories.


\section{Preliminaires}\label{sec:prelim}

Here we recall the main elements of quantale-enriched category theory that we shall use in the sequel. 
For more details, we point to~\cite{HofmannSealTholen2014,kelly:book,Lawvere73,Stubbe2005} and references therein.


\paragraph{Quantales.}
A {\em (commutative) quantale} is a (commutative) monoid $(\V, \ot, e)$ in the category $\mathbf{Sup}$ of complete sup-lattices and sup-preserving morphisms. 
In particular, $\V$ carries a complete partial order $\le$. 
Because the multiplication ({\em tensor product}) of the quantale preserves suprema in each variable, every $v \otimes - :\V \to \V$ has a right adjoint $[v,-]:\V \to \V$ ({\em internal hom}). 
Adjointness means that
\[
u \otimes v \leq w \quad \iff \quad u \leq [v,w] 
\]
holds for each $u,v,w\in \V$. 
Hence a commutative quantale is a posetal symmetric monoidal closed category, complete and cocomplete. 

\medskip

We list below several examples of quantales:
\begin{example}\label{ex:quantales}
\begin{enumerate}
\item\label{ex:2-quantale} 
The simplest example of a (commutative) quantale is the two-element chain $\V = (\two=\{0,1\}, \land,1)$, with meet as multiplication. 
\item\label{ex:3-quantale} The three-element chain $\mathbb 3=\{0<a<1\}$ supports two quantale structures for which the tensor is idempotent~\cite{CasleyCrewMeseguerPratt91}: 
\begin{itemize}
\item Taking $\ot$ to be the meet, one obtains the usual Heyting algebra $\V=(\mathbb{3}, \land, 1)$.
\item The second idempotent multiplication $\otimes$ on $\mathbb 3$ has unit $a$. 
The resulting quantale $(\mathbb 3, \ot,a)$ is a {\em Sugihara monoid}~\cite{OlsonRaftery07}. 
\end{itemize}
Besides the two idempotent structures mentioned above, there exists only one more quantale structure on $\mathbb 3$ (non-idempotent and integral), namely the {\em {\L}ukasiewicz} tensor product $v \otimes w = \max(0, v+w-1)$. 
\item\label{ex:[0,infty]-quantale}
The (extended) positive real numbers $([0, \infty], \ge)$ form a commutative quantale when endowed with addition and zero. The internal hom is $[u,v] = \max(v-u,0)$.
\item\label{ex:[0,1]-min-quantale} The unit interval $([0,1],\le)$, with tensor product $u\otimes v=\min(u,v)$ and internal hom $[u,v]=1 \textsf{ if }u\le v \textsf{ else } v$ is another example of a commutative quantale.
\item The left continuous distribution functions $\Delta =\{f:[0,\infty]\to [0,1] \mid f(a) = \bigvee_{b<a}f(b)\}$ form a commutative quantale with pointwise order and suprema (but not infima, which are obtained as $\left(\bigwedge_i f_i\right)(x) = \sup{y<x}{\mathsf{inf}_i f_i(y)}$)~\cite{HofmannSealTholen2014}. 
The tensor product is given by convolution, with unit the delta distribution at $0$. 
This quantale is in fact the {\em coproduct} of $[0,\infty]$ and $[0,1]$ in the category of quantales\cite{GutierrezGarciaHohleKubiak2017}. 
\item\label{ex:power-quantale} The powerset $\mathcal P(M)$ of a a (commutative) monoid $M$, with concatenation as $\ot$ and empty string as unit, is a (commutative) quantale; in particular, the powerset $\mathcal P(A^*)$ of the list monoid $A^*$ over a set $A$ is the free quantale over $A$.
\end{enumerate}
\end{example}
In the sequel, all quantales will be assumed commutative. 


\paragraph{Quantale enriched categories.}

Let $(\V, \ot, e,[-,-])$ be a commutative quantale.
A {\em $\V$-category} $\X$ consists of a set, still denoted $\X$, together with a map%
\footnote{
In the sequel we shall refer to this map as the $\V$-hom, $\V$-metric or $\V$-distance.
%
By slightly abuse, both the underlying set (of objects) and the $\V$-hom will be denoted by the same letter as the $\V$-category in question.} %
$\X:\X\times \X \to \V$ 
satisfying 
\[
e\leq \X(x,x)
\qquad \mbox{ and } \qquad 
\X(x,x')\otimes \X(x',x'') \leq \X(x,x'')
\]
for all $x,x',x''\in \X$. 
Each $\V$-category $\X$ carries an underlying order (reflexive and transitive relation) given by $x\le x' \Leftrightarrow e\le \X(x,x')$. 
A $\V$-category will be called {\em separated} (or {\em skeletal}) if its underlying order is in fact a partial order. 
The {\em opposite} of a $\V$-category $\X$, denoted $\X\op$, has the same objects as $\X$, and $\V$-homs $\X\op(x,x')= \X(x',x)$. 
A {\em $\V$-functor} $f:\X \to \Y$ is a map between the underlying sets satisfying $\X(x,x')\leq \Y(f(x),f(x'))$ for every objects $x,x'$ of $\X$. In particular, a $\V$-functor is monotone with respect to the underlying orders.
Finally, a {\em $\V$-natural transformation} $f\to g:\X \to \Y$ only accounts for pointwise inequality $e\leq \Y(f(x),g(x))$, $\forall \ x \in \X$. 

Notice that we use the same symbol $\leq$ for the order relation in both $\V$ and $\V$-categories, and rely on the context to tell them apart.
We shall proceed similarly for joins and meets with respect to the underlying order of an $\V$-category, whenever these exist.

\begin{example}\label{ex:vcats}
\begin{enumerate}
\item\label{ex:V as a V-category} 
The quantale $\V$ becomes itself a $\V$-category with $\V(v,w) = [v,w]$. 
The induced order is in fact the underlying order of the quantale. 
Being antisymmetric, $\V$ is separated as a $\V$-category. 
\item\label{ex:discr-vcat} 
Each set $A$ can be perceived as a (separated) $\V$-category $\mathsf{d}A$ when equipped with $\mathsf{d}A(a,b) = e\textsf{ if }a=b \textsf{ else } \bot$.  
Such $\V$-categories are called \emph{discrete}. 
\item \label{ex:lawvere} Ordered sets $(A,\leq)$ are enriched categories over the two-element quantale $\V=\mathbb 2$~\cite{Lawvere73}. 
The monotone maps are precisely the $\V$-functors. 
\item If $\V$ is the extended real half line $([0,\infty]\op, +, 0)$ as in Example~\ref{ex:quantales}.\ref{ex:[0,infty]-quantale}, a small $\V$-category $\X$ is a  generalised metric space~\cite{Lawvere73}: a set $\X$ endowed with a mapping $\X: \X \times \X \to [0, \infty]$ satisfying
\[
0\geq \X(x,x), \quad \X(x,x') + \X(x',x'') \geq \X(x,x'') \ \ .
\]
Observe that $\X: \X \times \X \to [0, \infty]$ is only a pseudo-metric, in the sense that two distinct points may have distance $0$, distances are not necessarily symmetric and are allowed to be infinite. 
An important consequence of the latter property is that the category of generalised metric spaces has colimits (whereas metric spaces do not even have coproducts).
A $\V$-functor is a non-expanding map. 
\end{enumerate}	
\end{example}

Denote by $\vcat$ the 2-category of (small) $\V$-enriched categories, $\V$-functors and $\V$-natural transformations, and by $\vcat\sep$ the full 2-subcategory consisting of  separated $\V$-categories.  
In fact, both $\vcat$ and $\vcat\sep$ are locally ordered $2$-categories, a result which will considerably simplify our reasoning.
Recall that the forgetful functor $\vcat \to \set$ is topological, and that the embedding $\vcat\sep \to \vcat$ is strongly epi-reflective~\cite{HofmannSealTholen2014}. 
Consequently, both $\vcat$ and $\vcat\sep$ are complete and cocomplete as ordinary categories. 
$\vcat$ is also symmetric monoidal closed. 
The tensor product $\X \otimes\Y$ of two $\V$-categories $\X$ and $\Y$ has as objects pairs $(x,y)$ of objects $x$ of $\X$ and $y$ of $\Y$, and $\V$-homs 
\[
(\X\otimes\Y)((x,y),(x',y'))= \X(x,x')\otimes\Y(y,y')
\]
Notice that we use the same symbol for the tensor product of $\V$-categories and for the underlying multiplication of the quantale, relying on the context to distinguish them. 
\begin{remark} If the quantale $\V$ is integral (that is, the unit $e$ coincides with the top element of the quantale), then the tensor product of two separated quantale-enriched categories is again separated. 
However, this property is not guaranteed in general. To see an example, consider the $4$-element Boolean algebra $\V = \{\bot,e,a,\top\}$, with commutative quantale structure given by $a\otimes a =e, a \otimes \top = \top$. 
The elements $\top,e,\bot$ are idempotent and the unit is $e$. 
This quantale is denoted $R^{4,2}_{2,2}$ in~\cite{GalatosJipsen}. 
Taking $\X=\Y=\V$, one has $\X\otimes \Y((a,e),(e,a)) = [a,e]\otimes [e,a] = a \otimes a = e = \X\otimes \Y((e,a),(a,e))$, but $(e,a)\neq (a,e)$. 
\end{remark}
The unit for the tensor product of $\V$-categories is the $\V$-category $\one$ with one object, denoted $0$, and $\V$-hom $\one(0,0)=e$. 
The internal $\V$-category-hom $[\X, \Y]$ has as objects $\V$-functors $f:\X \to \Y$, with $\V$-valued hom $[\X, \Y](f,f') = \bigwedge_x \Y(fx,f'x)$.


\paragraph{Quantale-enriched distributors.}

This is only a brief overview, for more details the reader is invited to \linebreak consult~\cite{Stubbe2005,Stubbe2006}.

\medskip

A {\em $\V$-distributor} (or profunctor, or (bi)module) $\varphi:\X \nrightarrow \Y$ between $\V$-categories is a $\V$-functor $\varphi:\Y\op \otimes\X \to \V$. 
Explicitly, 
\[
\Y(y',y)\otimes\varphi(y,x)\otimes\X(x,x') \leq \varphi (y',x')
\]
holds for all $x,x'\in \X$ and $y,y'\in \Y$. 
Composition of $\V$-distributors $\phi:\X \nrightarrow \Y$, $\psi:\Y \nrightarrow \Z$ is given by ``matrix multiplication''
\[
(\psi \otimes \phi)(z,x) = \bigvee_{y\in \Y} \psi(z,y) \otimes\phi(y,x)
\] %
This composition is strictly associative, due to enrichment in a quantale (whose underlying order is antisymmetric). The identity distributor on a $\V$-category $\X$ is the $\V$-hom $\X(-,-)$; in particular, the relations  
\[
\phi \otimes \X = \phi \ \mbox{ and } \ \Y \otimes \phi = \phi\]
hold for any $\V$-distributor $\phi:\X \nrightarrow \Y$. 
When $\phi$ is a contravariant, respectively covariant presheaf (see below), these relations are referred as the {\em co-Yoneda lemma}. 
Distributor composition admits both right extensions and right liftings
\[
\begin{array}{c}
\psi \otimes \phi \leq \xi 
\quad \Longleftrightarrow \quad 
\psi \leq \xi \swarrow \phi 
\quad \Longleftrightarrow \quad 
\phi \leq \psi \searrow \xi
\end{array} 
\]
where 
\[
(\xi\swarrow \phi)({{z}},{{y}})  = \bigwedge_{{x\in \X}} [\phi({{y}},{{x}}),\xi({{z}},{{x}})]
\ \mbox{ and } \ 
(\psi\searrow\xi) ({{y}},{{x}})  = \bigwedge_{{z\in \Z}} [\psi({{z}},{{y}}),\xi({{z}},{{x}})]
\]
for $\phi:\X \nrightarrow \Y$, $\psi:\Y \nrightarrow \Z$, $\xi:\X \nrightarrow \Z$.

Consequently, $\V$-categories, $\V$-distributors and $\V$-natural transformations (that is, pointwise inequalities) between them form a biclosed bicategory $\vdist$ (in fact a locally ordered $2$-category, given the enrichment in a quantale). 
A pair of distributors $\phi :\X \nrightarrow \Y$ and $ \psi:\Y \nrightarrow \X$ are {\em adjoints}, written $\phi \dashv \psi$, if $\X \le \psi \otimes \phi$ and $\phi \otimes \psi \le \Y$ hold. 
In particular, each $\V$-functor $f:\X\to \Y$ induces a pair of adjoint distributors $f_*\dashv f^*$ between $\X$ and $\Y$, where $f_*(y,x) = \Y(y,fx)$ and $f^*(x,y) = \Y(fx,y)$. 
In particular, observe that a $\V$-functor $f:\X \to \Y$ is left adjoint in $\vcat$, with $g:\Y \to \X$ as its right adjoint, if and only if $f^* = g_*$. 

\medskip

A distributor $\phi:\one \nto \X$ (equivalently, a $\V$-functor $\phi:\X \op \to \V$) is usually called {\em a contravariant presheaf} in category theory. It can be perceived as an $\V$-valued downset:  
the relation $\X\op(x',x) \leq [\phi(x'), \phi(x)]$, equivalent to $\X(x,x') \otimes\phi(x') \leq \phi(x)$, reads in case $\V=\two$ as 
\[
\left(x\leq x' \mbox{ and } x'\in \phi \right) \mbox{ implies } x\in \phi
\]
that is, $\phi$ is a downset in the usual sense. 
Here we implicitly identified a downset with its associated characteristic function. 
To preserve this intuition, we shall denote by $\D\X$ the $\V$-category of contravariant presheaves $\vdist(\one, \X) = [\X\op, \V]$. 
One of its prominent features that we recall is that it classifies distributors into $\X$~\cite{Stubbe2005}: there is an isomorphism of $\V$-categories
\begin{equation}\label{eq:classif-dist}
\vdist(\Y, \X) \cong \vcat(\Y, \D\X)
\end{equation}
functorial in the $\V$-categories $\X,\Y$, mapping a $\V$-distributor $\phi:\Y \nto \X$ to the $\V$-functor $f_\phi(y) = \phi(-,y)$, respectively a $\V$-functor $f:\Y \to \D \X$ to the $\V$-distributor $\phi_f(x,y) = f(y)(x)$.

Dually, a {\em covariant presheaf} is a $\V$-distributor $\psi: \X \nto \one$ (that is, $\psi:\X \to \V$), the $\V$-valued analogue of an upper set, and $\U\X = [\X, \V]\op$ is the $\V$-category of covariant presheaves (notice the ``$\mathsf{op}$''!).

We end this paragraph on distributors by recalling the {\em Yoneda embedding}: the fully faithful $\V$-functor $\y_\X:\X\to [\X\op, \V] = \D\X$ mapping an object $x$ of $\X$ to the {representable} contravariant presheaf $\X(-, x)$, corresponding, under the equivalence in~\eqref{eq:classif-dist}, to the identity distributor on $\X$. 
For $\V=\two$, this is the familiar principal downset embedding of an ordered set. 

For every contravariant presheaf $\phi:\one \nto \X$, the following relation hold ({\em Yoneda lemma}):
\[
\D\X(\y_\X(-),\phi) = \phi
\]
For completeness, we also mention the covariant version of the Yoneda embedding, namely $\y^\prime_\X: \X \to [\X,\V]\op = \U\X$, $\y^\prime_\X (x) = \X(x,-)$ which satisfies 
\[
[\X,\V](\y^\prime_\X(-),\psi) = \psi
\] 
for every covariant presheaf $\psi: \X \nto \one$. 


\paragraph{Limits and colimits.} 
Given a distributor $\xymatrix{\X\ar[r]|-@{|}^\phi & \Y}$ and a $\V$-functor $f:\Y \to \Z$, the {\em colimit} of $f$ weighted by $\phi$ is a $\V$-functor $\colim{\phi}{f}:\X \to \Z$ representing the distributor $\phi \searrow f^*$, i.e. 
\begin{equation}\label{eq:colim}
(\colim{\phi}{f})^* \cong \phi \searrow f^* 
\qquad \xymatrix{\X \ar[rr]|-@{|}^\phi \ar@{.>}[dr]|-@{}_{\colim{\phi}{f}} &&\Y \ar[dl]|-@{}^{f}
\\
& \Z   
}
\qquad \xymatrix{\X \ar[rr]|-@{|}^\phi \ar@{<.}[dr]|-@{|}_{(\colim{\phi}{f})^*} &&\Y \ar@{<-}[dl]|-@{|}^{f^*}
\\
& \Z  \ar@{}[u]|>>>>{\rightarrow} & }
\end{equation}
Equivalently, 
\[
\Z(\colim{\phi}{f}(x),z) \cong [\Y\op, \V](\phi(-,x), \Z(f-,z))
\]
Limits are defined dually. 
Some particular colimits shall be of interest in the sequel. We borrow their presentation from~\cite{Riehl2009}:

\begin{example}
\begin{enumerate}
	\item If the weight is the identity distributor on $\Y$, the colimit of $f:\Y \to \Z$ weighted by $\Y$ is precisely $f$:
	\[
	(\colim{\Y}{f})^* \cong \Y(-,-) \searrow f^* = f^*
	\]
    \item If the weight is the {\em left adjoint distributor} $\xymatrix{\X \ar[r]|-@{|}^{j_*} & \Y}$ associated to a $\V$-functor $\xymatrix{\X \ar[r]^{j} & \Y}$, then the above colimit satisfies 
    \[
    (\colim{j^*}{f})^* \cong j_* \searrow f^* = j^* \otimes f^* = (f\circ j)^* 
    \xymatrix{}
    \]
    hence the $j_*$-weighted colimit is given by {\em precomposition} with $j$. 
    
    \item Now, if the weight $\phi$ is instead the {\em right adjoint distributor} $\xymatrix{\X \ar[r]|-@{|}^{j^*} & \Y}$ associated to a $\V$-functor $\xymatrix{\Y \ar[r]^{j} & \X}$, then the relation 
    \begin{equation}\label{eq:pointwise lan}
    \Z(\colim{j^*}{f}(x),z) \cong [\Y\op, \V](\X(j-,x), \Z(f-,z))
    \end{equation}
   exhibits $\colim{j^*}{f}$ as the {\em pointwise left Kan extension} $\Lan_j f$ of $f$ along $j$. 
   
    \item If the weight is a contravariant presheaf $\phi:\one \nrightarrow \Y$, then $\colim{\phi}{f}:\one \to \Z$ picks an object of $\Z$ satisfying     \[
    \Z(\colim{\phi}{f},z) \cong [\Y\op, \V](\phi(-), \Z(f-,z))
    \]
that is, it becomes the usual $\V$-enriched colimit~\cite{kelly:book}. 
To be more precise, the isomorphism above is in fact an equality, due to the order on $\V$ being antisymmetric.    

    \item If both the domain and the codomain of the weight are the one-object $\V$-category $\one$, then the weight just picks an element $v \in \V$, and so does the functor -- it picks an object $z \in \Z$. The resulting colimit is known as the {\em tensor} of $z$ by $v$ and denoted $v \otimes z$. Explicitly, $v \otimes z$ is characterised by
    \[
    \Z(v\otimes z,-) \cong [v,\Z(z,-)]
    \]
    A $\V$-category having all tensors is called {\em tensored}. The use of the same symbol for the tensor as a colimit and the underlying multiplication of the quantale is motivated by the fact that in $\V$, seen as a $\V$-category, these two notions coincide. 
    \item 
    Let $K$ be a set, seen as a discrete $\V$-category and $\phi:\one \nto K$ the contravariant presheaf assigning the value $e$, the unit of $\V$, to each $k\in K$. To give a $\V$-functor $f:K \to \Z$ is the same as giving a family $(z_k)_{k\in K}$ of objects of $\Z$. Then the resulting colimit in $\Z$, denoted $\bigvee_k z_k$,
is called {\em the join} of the family $(z_k)_{k\in K}$ and verifies
    \[
    \Z(\bigvee_k z_k,-) \cong \bigwedge_k\Z(z_k,-)
    \]  
    In particular, the above colimit is also the {\em join} (hence the name) of the family $(z_k)_{k\in K}$ in the underlying ordered set $(\Z,\le)$, but the converse is not necessarily true, unless $\Z$ is {\em cotensored}~\cite{Stubbe2006} (that is, $\Z\op$ is tensored). 
     
\end{enumerate}
\end{example}

\paragraph{Cocomplete $\V$-categories.} As usual, a $\V$-category admitting all colimits will be called {\em cocomplete}, and a $\V$-functor preserving all colimits which exist in its domain, {\em cocontinuous}.

\begin{example} 
(Contravariant) presheaf categories are cocomplete: the colimit of a $\V$-functor $f:\Y \to \D \Z$ weighted by $\phi:\X \nto \Y$ is the $\V$-functor $\X \to \D \Z$ corresponding to the distributor $\phi_f \otimes \phi$, where $\phi_f$ is the distributor $\Y \nto \Z$ corresponding to $f$ under the equivalence~\eqref{eq:classif-dist}~\cite{Stubbe2005}. 
\end{example}

The colimit of a $\V$-functor $f:\Y\to \Z$ weighted by a distributor $\phi:\X \nto \Y$ can be expressed by tensors and joins, whenever these exist in the codomain $\V$-category $\Z$~\cite{Stubbe2006}:
\begin{equation}\label{eq:colim-join-tensor}
\left( \colim{\phi}{f} \right)(x) = \bigvee_y \phi(y,x) \ot f(y)
\end{equation}
In particular, the left Kan extension of $f:\Y \to \Z$ along $j:\Y \to \X$ can be explicitly written as
\[
\mathsf{Lan}_j f(x) = \bigvee_y \X ( j (y), x) \ot  f (y) 
\]

\begin{proposition}~\cite{Stubbe2005}\label{prop:cocts}
The following are equivalent for a $\V$-category $\X$:
\begin{enumerate}
\item $\X$ is cocomplete.
\item \label{prop:sup} $\X$ has all colimits of the identity functor weighted by contravariant presheaves: for each $\phi:\one \nto \X$, there is an object $\colim{\phi}{\id_\X}$ in $\X$ such that 
\begin{equation}\label{eq:sup}
\X(\colim{\phi}{\id_\X},x) = \D\X(\phi,\X(-,x)) = \bigwedge_{x'\in \X} [\phi(x'),\X(x',x)]
\end{equation}
holds for every object $x$ of $\X$. 
\item The Yoneda $\V$-functor $\y_\X:
\X \to \D\X$ is a right adjoint.
\end{enumerate}
\end{proposition}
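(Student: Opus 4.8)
The plan is to establish the cycle of implications $(1)\Rightarrow(2)\Rightarrow(3)\Rightarrow(1)$. The implication $(1)\Rightarrow(2)$ is immediate: the objects $\colim{\phi}{\id_\X}$ occurring in $(2)$ are particular weighted colimits, and the displayed equality \eqref{eq:sup} is exactly the instance $f=\id_\X$, $\phi\colon\one\nto\X$ of the general characterisation of colimits recorded after \eqref{eq:colim}, with the isomorphism sharpened to an equality because the order on $\V$ is antisymmetric.

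For $(2)\Rightarrow(3)$ I would exhibit an explicit left adjoint to the Yoneda $\V$-functor. Under hypothesis $(2)$ the assignment $L\colon\D\X\to\X$, $L\phi=\colim{\phi}{\id_\X}$, is well defined, and since $\y_\X x=\X(-,x)$, unfolding the $\V$-hom of $\D\X=[\X\op,\V]$ turns \eqref{eq:sup} into the equality $\X(L\phi,x)=\D\X(\phi,\y_\X x)$, valid for all $\phi\in\D\X$ and all $x\in\X$. Because $\vcat$ is locally ordered, to conclude $L\dashv\y_\X$ it then remains only to check that $L$ is a $\V$-functor and that the displayed equality is compatible with the $\V$-homs on both sides; both are routine consequences of the universal property encoded by \eqref{eq:sup} (the monotonicity of $L$ being extracted by the usual Yoneda-style manipulation), and together with the criterion $L^*=(\y_\X)_*$ recalled earlier this gives the adjunction, hence $(3)$.

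For $(3)\Rightarrow(1)$, write $L\dashv\y_\X$. The presheaf $\V$-category $\D\X$ is cocomplete, as recalled in the example above. Moreover, substituting $\phi=\y_\X x'$ into $\X(L\phi,x)=\D\X(\phi,\y_\X x)$ and using the Yoneda lemma $\D\X(\y_\X(-),\psi)=\psi$ gives $\X(L\y_\X x',x)=\X(x',x)$ for every $x$, so that $L\y_\X\cong\id_\X$. Given now an arbitrary weight $\phi\colon\A\nto\B$ and $\V$-functor $f\colon\B\to\X$, I would form the colimit $\colim{\phi}{(\y_\X\circ f)}\colon\A\to\D\X$ inside $\D\X$ and set $\colim{\phi}{f}:=L\circ\colim{\phi}{(\y_\X\circ f)}$. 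Since a left adjoint $\V$-functor preserves all weighted colimits, $L\circ\colim{\phi}{(\y_\X\circ f)}\cong\colim{\phi}{(L\circ\y_\X\circ f)}\cong\colim{\phi}{f}$; comparing this with the defining property \eqref{eq:colim} --- which for a functor into $\X$ can be read off from the corresponding property in $\D\X$, $\y_\X$ being fully faithful --- confirms that the object so constructed really is the $\phi$-weighted colimit of $f$, and the cycle closes.

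I expect the substantive point to be $(3)\Rightarrow(1)$: one must be careful that being a left adjoint in the $2$-category $\vcat$ (equivalently $L^*=(\y_\X)_*$) genuinely entails preservation of $\V$-\emph{weighted} colimits, and that the colimit computed in the presheaf category transports back faithfully along $L$. An alternative route that avoids condition $(3)$ altogether is to prove $(2)\Rightarrow(1)$ directly: reduce a general weighted colimit to its pointwise values via $\bigl(\colim{\phi}{f}\bigr)(a)=\colim{\phi(-,a)}{f}$ with $\phi(-,a)\colon\one\nto\B$, and then, using the co-Yoneda lemma together with the computation of $\V$-homs in $\D\X$, identify $\colim{\psi}{f}$ for $\psi\colon\one\nto\B$ with $\colim{f_*\otimes\psi}{\id_\X}$; the formula \eqref{eq:colim-join-tensor} is precisely the concrete shadow of this reduction.
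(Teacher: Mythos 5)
Your argument is correct: the cycle $(1)\Rightarrow(2)\Rightarrow(3)\Rightarrow(1)$ is the standard one, with \eqref{eq:sup} read as the equality $L^*=(\y_\X)_*$ to get the adjunction, and cocompleteness recovered from $L\dashv\y_\X$ by computing colimits in $\D\X$ and transporting them along the left adjoint. Note that the paper itself gives no proof of this proposition --- it is quoted from the cited reference --- and your reconstruction matches how the result is established there, so there is nothing further to compare.
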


Interpreting Proposition~\ref{prop:cocts}, item \ref{prop:sup} above in the ordered case ($\V=\two$), we see that $\colim{\phi}{\id_\X} \leq x$ holds if and only if $x' \in \phi $ implies $x' \leq x$ for all $x'$. 
Hence, intuitively, $\colim{\phi}{\id_\X}$ computes the {\em $\V$-supremum} of the $\V$-contravariant presheaf $\phi:\X\op \to \V$ (thinking again of $\phi$ as a ``$\V$-valued downset'') and will be denoted $\sup{\X}{\phi}$. 
Then cocompleteness of a $\V$-category $\X$ can be rephrased as the existence of all $\V$-suprema in $\X$ and Equation~\eqref{eq:sup} shows that the left adjoint of $\y_\X$ is precisely $\sup{\X}{}$. 
As $\y_\X$ is fully-faithful, $\sup{\X}{}\circ \y_\X \cong \id_\X$ holds. 
Lastly, express the $\V$-supremum of a contravariant presheaf $\phi:\X\op \to \V$ using joins and tensors using~\eqref{eq:colim-join-tensor}:
\begin{equation}\label{eq:sup-as-join-of-tensor}
\sup{\X}{\phi} = \bigvee_{x\in \X} \phi(x) \ot x
\end{equation}

\medskip

There is yet another description of cocomplete $\V$-categories that we recall for completeness, although we shall not need it: under the extra assumption of being separated, the cocomplete $\V$-categories are precisely the $\V$-modules in $\mathbf{Sup}$. That is, they are sup-lattices (with respect to the underlying order) endowed with an action of the quantale~\cite{PedicchioTholen89,Stubbe2006}. 

\medskip

In view of the above, we shall denote by $\vsup$ the  ($2$-)category of cocomplete $\V$-categories and cocontinuous $\V$-functors, and by $\vsup\sep$ its full sub-$2$-category consisting of separated cocomplete $\V$-categories. It is easy to see that a cocomplete $\V$-category is separated if and only if the isomorphism $\sup{\X}{}\circ \y_\X \cong \id_\X$ is in fact an equality, and that $\vsup\sep$ is biequivalent to $\vsup$~\cite{Stubbe2006}.


\section{The free cocompletion monad on $\vcat$}


In this section we recall the free cocompletion monad $\D$ on $\V$-categories, also known as the (contravariant) presheaf monad. 
References include~\cite{Stubbe2005,Stubbe2007a,Stubbe2010}. 
In general enriched category theory $\D$ is only a pseudomonad, but here we benefit from enriching in a quantale and obtain a genuine $2$-monad. 
Besides that, all quantale-enriched categories are small, including their free cocompletions, so we do not have to worry about size issues. 
The novelty of our presentation is the emphasis on the monoidal structure of $\D$, and the results obtained from this approach.


\paragraph{The ($2$-)functor $\D$.}
The functor $\D:\vcat \to \vcat$ maps a $\V$-category $\X$ to the (separated) $\V$-category of contravariant presheaves $\D\X = [\X\op,\V]$, and a $\V$-functor $f:\X \to \Y$ to the left Kan extension $\D f = 
\mathsf{Lan}_{\y_\X}%
(\y_\Y \circ f)
:\D\X \to \D\Y$:
\[
\xymatrix{
\X \ar[r]^{\y_\X} \ar[d]_f \ar@{}[dr]|{\to}
& 
\D\X \ar@{.>}[d]^{\D f}
\\
\Y \ar[r]^{\y_\Y} 
&
\D\Y
}
\]
Explicitly, $\D f (\phi) = f_* \otimes \phi $. 
In particular, the $2$-cell (inequality) above expressing the unit of the left Kan extension is actually an equality, due to the fully faithfulness of the Yoneda embedding and to the presheaf category being separated. 
$\D$ is a locally fully faithful $2$-functor: for every $f,g:\X \to \Y$, the relation
\[
\begin{array}{lllll}
[\X,\Y](f,g) 
& = & 
[\X, \D \Y ](\y_\Y \circ f, \y_\Y \circ g) 
& = & 
[\X, \D \Y] (\y_\X \circ f, \D g \circ \y_\X) 
\\[10pt]
& = & 
[\D \X, \D \Y] (\D f, \D g ) 
\end{array}
\]
holds.

The formula $\D f = f_* \otimes -$ giving the action of $\D$ on arrows immediately shows that $\D f$ is part of a triple adjunction~\cite[Proposition~3.1]{Stubbe2007a}
\begin{equation}\label{eq:Df:triple-adj}
\xymatrix@C=60pt{
\D \X 
\ar@/^2.75ex/[r]^-{\D f}
\ar@{}@<1.5ex>[r]|{\perp}
\ar@{<-}[r]|{\D_{-1} f}
\ar@{}@<-1.5ex>[r]|{\perp}
\ar@/_2.75ex/[r]_-{\D_{\forall } f}
&
\D\Y
}
\end{equation}
with $\D_{-1} f = f^* \otimes - = f_*\searrow -$ and $\D_\forall f = f^*\searrow-$.\footnote{The notations $\D_{-1}$ and $\D_\forall$ were chosen as to remind of the inverse image, respectively universal image functors between ordinary powersets.}

\begin{remark}\label{rem:inverter}
For a fully faithful $\V$-functor $f:\X \to \Y$, the triple adjunction mentioned above
\[
\D f \dashv \D_{-1} f \dashv \D_\forall f
\]
is also fully faithful~\cite{DyckhoffTholen1987,KellyLawvere1989}, producing thus a {\em Unity and Identity of Adjointly Opposites (UIAO)}~\cite{Lawvere96}. 
%
As in any UIAO, there is an induced $2$-cell (inequality)
$\D f \le \D_\forall f $. 
We can thus consider the inverter~\cite[Proposition~2.3]{KenneyWood2010} in the $2$-category $\vcat$ of this $2$-cell, namely
\begin{equation}\label{eq:UIAO-inverter}
\xymatrix@C=40pt{ 
\mathsf{Inv}(\D f,\D_\forall f) \ar[r] & \D\X \ar@/^1.5ex/[r]^{\D f} \ar@{}[r]|{\downarrow} \ar@/_1.5ex/[r]_{\D_\forall f} & \D\Y} 
\end{equation}
The description is trivial: $\mathsf{Inv}(\D f,\D_\forall f)$ is the $\V$-subcategory of $\D\X$ consisting of every $\xymatrix@1{\one \ar[r]|-@{|}^\varphi & \X}$ such that $\D_\forall f (\varphi) \le \D f(\varphi)$ holds. 
However, using the 
covariant Yoneda embedding on $\D\Y$, we can rewrite $\D_\forall f (\varphi) \le \D f(\varphi)$ for latter use as 
\[
\begin{array}{lll}
\D\X(\psi \circ f, \phi) 
&
\le 
&
\D\Y(\psi, \D f(\phi))
\end{array}
\]
for every covariant presheaf $\psi:\Y \nto \one$. 
Notice that the reverse inequality always holds, so in fact in the above we have equality (given that both sides evaluate in $\V$).  

In the particular case when the fully faithful $f:\X \to \Y$ is the Yoneda embedding $\y_\X:\X \to \D\X$, the inverter $\mathsf{Inv}(\D \y_\X,\D_\forall \y_\X)$ in~\eqref{eq:UIAO-inverter} is precisely the Cauchy completion of the $\V$-category $\X$~\cite{RosebrughWood1994,Stubbe2007a}.
\end{remark}


\paragraph{The ($2$-)monad $\D$.}

The unit of $\D$ is the (fully faithful) Yoneda $\V$-embedding $\y_\X:\X \to \D\X$, $\y_\X (x) = \X(-,x)$, while the multiplication assigns to each $\V$-``downset of downsets'' $\Phi\in \D\X$ its $\V$-valued ``union'': $\mu_\X (\Phi) = \colim{\Phi}{\id_{\D\X}} $, 
which can be rewritten as 
\[
\begin{array}{lll}
\mu_\X (\Phi)
&
= 
&
\bigvee_{\phi\in \D\X} \Phi(\phi)\otimes\phi
\end{array}
\]
In fact, we can do better and rewrite the multiplication as 
\[
\mu_\X (\Phi) = \bigvee_{\phi\in \D\X} \Phi(\phi)\otimes\D\X(\y_\X(-),\phi) = \Phi \otimes (\y_\X)^* = (\D_{-1} \y_\X)(\Phi)
\]
exhibiting thus the adjunction 
\begin{equation}\label{eq:mu-as-inverse-image}
\D\y_X \dashv \mu_\X
\end{equation}


\paragraph{The Kock-Z\"oberlein property.} 
$\D$ is a Kock-Z\"oberlein $2$-monad (abbreviated KZ monad), also known as a lax-idempotent $2$-monad~\cite{Kock95,Zoberlein1976}. 
This is an old well-known result, for which we provide a quick argument below, with the use of the triple adjunction~\eqref{eq:Df:triple-adj} instantiated at the Yoneda embedding: $\D\y_\X \dashv \D_{-1}\y_\X \dashv \D_{\forall} \y_\X$.
From the definition of $\D_\forall$ we can immediately see that 
\begin{equation*}
\D_\forall \y_\X= \y_{\D\X}
\end{equation*} 
Using also~\eqref{eq:mu-as-inverse-image}, we finally obtain
\[
\D\y_\X \dashv 
\mu_\X \dashv 
\y_{\D\X}
\]
exhibiting $\D$ as a KZ-monad. 
For further use, remark that the fully faithfulness of $\y_{\D\X}$ entails the following: 
\begin{enumerate}
\item The counit of $\mu_\X \dashv \y_{\D\X}$ is an isomorphism (equality, in this case, because the codomain is a separated $\V$-category) 
\begin{equation*}
\mu_{\X} \circ \y_{\D\X} = \mathsf{id}_{\D\X}
\end{equation*}
\item The unit of $\D\y_\X \dashv \mu_\X$ is an isomorphism (again, equality)
\[
\mu_\X \circ \D\y_\X = \id_{\D\X}
\]
\end{enumerate}


\paragraph{Monoidal structure of the monad $\D$.}
A commutative monad is a monad in the $2$-category of symmetric monoidal categories and lax symmetric monoidal functors. 
In particular, the unit and the multiplication of the monad are monoidal natural transformation. 
The main result of~\cite{LopezFranco2011} 
is that KZ monads enriched in a $2$-category are (pseudo-)commutative. 
We shall skip the details of showing that $\D$ is $\vcat$-enriched, as this will deviate from our plans. 
We simply point out the monoidal structure of $\D$. 
The reader will easily be able to deduce the missing details himself.

\medskip

The symmetric monoidal structure of $\D$ is given by $\d_0:\one \to \D\one$, the $\V$-functor mapping $0$ to (the constant $\V$-functor to) $e$, and by the family of $\V$-functors $\d_{2,\X,\Y} :\D\X \otimes\D\Y \to \D(\X\otimes\Y)$, natural in $\X$ and $\Y$, obtained as left Kan extensions 
\[
\xymatrix@R=50pt@C=50pt{
\X \otimes\Y 
\ar[r]^-{\y_\X \otimes\y_\Y}
\ar[d]_-{\y_{\X\otimes\Y}}
\ar@{}@<6ex>[d]|-{\to}
&
\D\X \otimes\D \Y
\ar@<1.5ex>@{}[dl]^(.025){}="a"^(.8){}="b" \ar "a";"b"|{\phantom{AA}}
\\
\D(\X \otimes\Y)
&
\ar@{}[u]|{\d_{2,\X,\Y}=\mathsf{Lan}_{\y_\X \otimes\y_\Y}(\y_{\X\otimes\Y})}
}
\]
Explicitly, $\d_{2,\X,\Y}$ maps a pair of presheaves $\phi:\one \nto \X$, $\psi: \one \nto \Y$ to the $2$-variable presheaf 
\[
\d_{2,\X,\Y}(\phi, \psi)(x,y) = \phi (x) \otimes\psi (y)
\]

\begin{proposition}
$\d_{2,\X,\Y}:\D\X \otimes \D\Y \to \D(\X \otimes \Y)$ is a dense $\V$-functor.
\end{proposition}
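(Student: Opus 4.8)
The plan is to show that the Yoneda embedding $\y_{\D(\X\ot\Y)} \colon \D(\X\ot\Y) \to \D\D(\X\ot\Y)$ factors (up to the usual colimit-density characterisation) through the restriction along $\d_{2,\X,\Y}$, i.e. that every presheaf on $\D(\X\ot\Y)$ is the colimit of its restriction along $\d_{2,\X,\Y}$ — equivalently, that $\mathsf{Lan}_{\d_{2,\X,\Y}}(\y_{\D\X\ot\D\Y})$, or rather the associated nerve functor $\D(\X\ot\Y) \to \D(\D\X\ot\D\Y)$, $\Theta \mapsto \D(\X\ot\Y)(\d_{2,\X,\Y}(-),\Theta)$, is fully faithful. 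The key structural input is that $\d_{2,\X,\Y}$ is itself a left Kan extension of a composite of Yoneda embeddings along a Yoneda embedding, namely $\d_{2,\X,\Y} = \mathsf{Lan}_{\y_\X\ot\y_\Y}(\y_{\X\ot\Y})$, together with the fact that $\y_\X\ot\y_\Y$ is dense (being, essentially, $\d_2$ for the free monoidal situation — more concretely, because $\X\ot\Y$ generates $\D\X\ot\D\Y$ under the relevant colimits, which follows from density of each $\y_\X$ and the fact that tensoring a cocomplete $\V$-category with a fixed one preserves the needed colimits).

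First I would record the explicit formula already given, $\d_{2,\X,\Y}(\phi,\psi)(x,y)=\phi(x)\ot\psi(y)$, and compute the distributor $(\d_{2,\X,\Y})_* \colon \D\X\ot\D\Y \nto \D(\X\ot\Y)$, i.e. $(\d_{2,\X,\Y})_*(\Theta,(\phi,\psi)) = \D(\X\ot\Y)(\Theta, \d_{2,\X,\Y}(\phi,\psi))$, where $\Theta \colon \one\nto\X\ot\Y$. Density of $\d_{2,\X,\Y}$ is exactly the statement that the counit of the colimit/nerve adjunction at $\Theta$ is an isomorphism, which by the colimit formula \eqref{eq:colim-join-tensor} and Proposition~\ref{prop:cocts} reduces to the identity
\[
\Theta \;=\; \bigvee_{(\phi,\psi)\in\D\X\ot\D\Y} \D(\X\ot\Y)\bigl(\d_{2,\X,\Y}(\phi,\psi),\Theta\bigr)\ot \d_{2,\X,\Y}(\phi,\psi)
\]
in $\D(\X\ot\Y)$. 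Evaluating both sides at $(x,y)$, the right–hand side is $\bigvee_{\phi,\psi}\bigl[\bigwedge_{x',y'}[\phi(x')\ot\psi(y'),\Theta(x',y')]\bigr]\ot\phi(x)\ot\psi(y)$; taking in particular $\phi=\X(-,x')$ and $\psi=\Y(-,y')$ representable and using the Yoneda lemma $\D(\X\ot\Y)(\y(-),\Theta)=\Theta$ gives the inequality "$\ge\Theta(x,y)$", while the "$\le$" direction is immediate from the defining distributor inequality for $\Theta$ as a presheaf. So the computation will essentially come down to plugging representables into a sup–inf expression and invoking Yoneda twice.

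A cleaner route, which I would actually prefer to write up, is to exploit that density is preserved under composition of dense functors and that left Kan extensions along dense functors along which one Kan-extends remain well behaved: since $\y_{\X\ot\Y}\colon \X\ot\Y\to\D(\X\ot\Y)$ is dense (it is a Yoneda embedding) and factors as $\y_{\X\ot\Y}=\d_{2,\X,\Y}\circ(\y_\X\ot\y_\Y)$, it suffices to know that $\y_\X\ot\y_\Y\colon \X\ot\Y\to\D\X\ot\D\Y$ is dense and that the comparison is "surjective enough" — precisely, one uses the standard lemma that if $g\circ f$ is dense then $g$ is dense. Thus the proof splits into: (i) density of $\y_\X\ot\y_\Y$, obtained from density of each $\y_\X$ together with the fact that $-\ot\D\Y$ and $\D\X\ot-$ preserve the colimits exhibiting objects as colimits of representables (here the local-ordering of $\vcat$ and the explicit tensor formula make this routine); and (ii) the cancellation lemma "$g\circ f$ dense $\Rightarrow$ $g$ dense", applied with $f=\y_\X\ot\y_\Y$, $g=\d_{2,\X,\Y}$, $g\circ f=\y_{\X\ot\Y}$.

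The main obstacle I anticipate is step (i): establishing that $\y_\X\ot\y_\Y$ is dense is not quite formal, because $\D\X\ot\D\Y$ is in general not the free cocompletion of $\X\ot\Y$ (it is only a "bilinear" cocompletion), so one cannot simply invoke a universal property. The honest argument is the computation sketched above — showing that every $(\phi,\psi)$, hence every object of $\D\X\ot\D\Y$, is a colimit of pairs of representables, which uses the co-Yoneda lemma in each variable and commutation of the relevant joins and tensors with the quantale tensor $\ot$ (available since $\ot$ preserves suprema in each variable). Once density of $\y_\X\ot\y_\Y$ is in hand, the rest is the cancellation lemma, which holds in any locally ordered $2$-category of the present kind. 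I would therefore organise the write-up as: explicit distributor computation $\Rightarrow$ density of $\y_\X\ot\y_\Y$ $\Rightarrow$ density of $\d_{2,\X,\Y}$ by cancellation against the factorisation $\y_{\X\ot\Y}=\d_{2,\X,\Y}\circ(\y_\X\ot\y_\Y)$.
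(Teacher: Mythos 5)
Your argument is correct, and your ``cleaner route'' is essentially the paper's own proof: the paper disposes of the statement in one line by citing Kelly's Proposition~5.10, using precisely the two facts you isolate --- that $\y_{\X\ot\Y}$ is dense and that $\y_\X\ot\y_\Y$ is fully faithful, the latter serving only to identify $\d_{2,\X,\Y}\circ(\y_\X\ot\y_\Y)$ with $\y_{\X\ot\Y}$ (which one can also read off from the explicit formula on representables). Where your write-up miscalibrates is in the organisation: the cancellation principle ``$g\circ f$ dense $\Rightarrow g$ dense'' needs \emph{no} hypothesis on $f$, because the nerve of $g\circ f$ is the nerve of $g$ followed by restriction along $f$, and since $\V$-functors only increase homs, full faithfulness of the composite nerve forces full faithfulness of the nerve of $g$. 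Consequently your step~(i) --- density of $\y_\X\ot\y_\Y$ --- is superfluous, and the ``main obstacle'' you identify dissolves entirely; you are not entitled to assume $\D\X\ot\D\Y$ is a free cocompletion, but you never need to. Your first, computational route (restricting the canonical colimit to representable pairs and invoking Yoneda and co-Yoneda) is also valid and self-contained in the posetal setting, but it amounts to re-proving the cancellation lemma by hand in this special case.
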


\begin{proof}
This follows from~\cite[Proposition~5.10]{kelly:book}, using that $\y_{\X \otimes \Y}$ is dense and that $\y_\X \otimes \y_\Y$ is fully faithful. 
\end{proof}

As a consequence of the monad $\D$ being commutative, the Eilenberg-Moore category of $\D$-algebras is (symmetric) monoidal, provided it is conveniently cocomplete~\cite{Jacobs94,Kock1970,Seal2013}. 
This will be the topic of Section~\ref{sec:Vsup}.


\paragraph{$\D$-algebras.} 
Because $\D$ is a KZ $2$-monad, its pseudo-algebras are those $\V$-categories $\A$ for which the unit of the monad (the Yoneda embedding $\y_\A$) has a left adjoint.
That is, the pseudo-$D$-algebras are precisely the cocomplete $\V$-categories, and pseudo-$\D$-morphisms are cocontinuous $\V$-functors.~\cite{Kock95}.\footnote{We shall denote cocomplete $\V$-categories by $\A$, $\B$, etc. to distinguish them from mere $\V$-categories denoted $\X$, $\Y$, and so on.} 

The strict $\D$-algebras are the separated cocomplete $\V$-categories~\cite{Stubbe2017}. 
Observe that pseudo-$\D$-morphisms with codomain separated cocomplete $\V$-categories are always strict, due to the enrichment in a quantale. 

\medskip

Finally, recall that the category of strict $\D$-algebras and strict $\D$-morphisms $\vsup\sep$ is complete and cocomplete as an ordinary category, being monadic over $\set$~\cite{JoyalTierney1984,PedicchioTholen89,Stubbe2006}. 
Just for completeness, we recall again the equational presentation of separated cocomplete $\V$-categories: these are precisely sup-lattices endowed with an action of the quantale $\V$.

\medskip

To ease notation, we shall suppose from now on that all cocomplete $\V$-categories are separated, and simply write $\vsup$ instead of $\vsup\sep$. 
The vigilent reader should remember that $\vsup$ and $\vsup\sep$ are biequivalent $2$-categories~\cite{Stubbe2006}.


\section{The tensor product of cocomplete $\V$-categories}\label{sec:Vsup}

If $\A$ and $\B$ are complete sup-lattices, then $\A \times \B$ is again a complete sup-lattice with pointwise order. 
Here $\A \times \B$ denotes the cartesian product of $\A$ and $\B$ as ordered sets, which happens to be also their tensor product in $\ord =\twocat$. 
It is as well their cartesian product in $\twosup$.

\begin{remark}
For an arbitrary quantale $\V$, it is not true in general that the tensor product in $\vcat$ of two cocomplete $\V$-categories is again cocomplete. 
For example, take $\V$ to be the three-element chain $\{0<a<1\}$, with the non-cartesian idempotent multiplication of Example~\ref{ex:quantales}.\ref{ex:3-quantale}. 
Then $\V$ is cocomplete as a $\V$-category, but the tensor product in $\vcat$ of $\V$ with itself, although cocomplete as a sup-lattice, lacks tensors by $1$. 
Hence it is not a cocomplete $\V$-category.
\end{remark}

The solution to this issue is very familiar to category theorists. 
It essentially relies on the fact that the free cocompletion monad is commutative and that its associated category of (strict) algebras $\vsup\sep$ is cocomplete, hence $\vsup\sep$ carries a monoidal product $-\td-$ which classifies bimorphisms ($\V$-functors which are separately cocontinuous in each argument)~\cite{BanaschewskiNelson1976}. 
This tensor product has been described by various authors~\cite{EklundGutierrez-GarciaHohleKortelainen2018,JoyalTierney1984,Tholen2024}. 
Here we provide another description of $-\td-$, in the spirit of~\cite{Shmuely74}, using specific tools of quantale-enriched categories.


\paragraph{The tensor product as an inverter.}
It is well-known that Eilenberg-Moore algebras for a monad have a canonical presentation via coequalisers of free algebras.  
In the special case of KZ-monads on 2-categories, these coequalisers can be strengthen to coinverters~\cite{LeCreurerMarmolejoVitale02}. 
In particular, the free cocompletion monad $\D$ on $\vcat$ is such a monad, hence each pseudo-$\D$-algebra, that is, each cocomplete $\V$-category, is realised as a {\em reflexive pseudo-coinverter in $\vsup$}:
\begin{equation}\label{eq:D-alg-as-coinverter}
\xymatrix@C=40pt{\D^2 \A \ar@/^1.5ex/[r]^{\mu_\A} \ar@/_1.5ex/[r]_{\D\sup{\A}{}} \ar@{}[r]|{\downarrow} & \D\A \ar[r]^-{ \sup{\A}{}} & \A}
\end{equation}
the common reflection being $\D\y_\A$. 
Actually, more is true:~\eqref{eq:D-alg-as-coinverter} is a {\em split pseudo-coinverter} in $\vcat$, with splittings $\y_\A \vdash \sup{\A}{} $ and $\y_{\D\A}\vdash \mu_{\A}$. 
Of course, if $\A$ is separated, the pseudo-coinverter becomes strict.
In~\eqref{eq:D-alg-as-coinverter} above, the $2$-cell $\mu_\A \le \D \sup{\A}{}$ is obtained from the fully faithful adjoint string
\(
\D\sup{\A}{} \dashv \D\y_\A  \dashv \mu_\A \dashv \y_{\D\A}
\) 
for the cocomplete $\V$-category $\A$. 

\medskip

Let as above $-\td-$ denote the tensor product on $\vsup$ induced by the commutativity of $\D$.\footnote{The notation is consistent with the fact that $\D$-algebras are in fact modules over the quantale $\V$~\cite{PedicchioTholen89,Stubbe2006}.} 
By the general theory of commutative monads, $(\vsup, \td)$ is symmetric monoidal closed, with unit $\V$ and internal hom given by $\vsup(\A, \B)$ (cocompleteness following from cocompleteness of $\B$) and classifying bimorphisms ($\V$-functors which are separately continuous in each argument)~\cite{BanaschewskiNelson1976,Kock72,Seal2013}
\[
\vsup(\A\td \B,\C) \cong \vbisup(\A \otimes\B, \C) \cong \vsup(\A, \vsup(\B,\C))
\]
The usual construction of the tensor product of algebras for a commutative monad gives the tensor product of two cocomplete $\V$-categories as the following reflexive coequalizer in $\vsup$: 
\begin{equation}\label{eq:td-as-coeq}
\xymatrix@C=60pt{
\D(\D\A \otimes\D\B) 
\ar@<+1ex>[r]^{\D(\sup{\A}{}\otimes\sup{\B}{})} \ar@<-1ex>[r]_{\mu_{\A\otimes\B} \circ \d_{2,\D\A,\D\B}} 
&
\D(\A \otimes\B) 
\ar[r]^{}
&
\A \td \B
}
\end{equation}
The above description is often difficult to manipulate (see for example the proof for the associativity of $-\td-$ in~\cite{Seal2013}). 
Here we shall take advantage of the convenient categorical structure of $\vsup$ in order to provide a simpler representation for $-\td-$.

\begin{lemma}\label{lem:refl-coinverter}
The following diagram is a reflexive coinverter in $\vsup$
\[
\xymatrix@C=50pt{
\D^{2}\A \td \D^2 \B 
\ar@<+1.2ex>[r]^{\mu_\A \td \mu_\B} \ar@<-1.2ex>[r]_{\D\sup{\A}{}\td \D\sup{\B}{}} \ar@{}[r]|{\downarrow}
&
\D\A \td \D\B
\ar[r]^-{\sup{\A}{}\td \sup{\B}{}}
&
\A \td \B
}
\]
naturally in the cocomplete $\V$-categories $\A$ and $\B$.
\end{lemma}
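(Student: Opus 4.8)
The plan is to derive this coinverter presentation from the known coequalizer presentation~\eqref{eq:td-as-coeq} together with the fact that $-\td-$ is a cocontinuous functor in each variable (being symmetric monoidal closed, it preserves colimits in each argument). First I would observe that, by Lemma's hypothesis being the tensor of two copies of the split pseudo-coinverter~\eqref{eq:D-alg-as-coinverter}, it suffices to show that the functor $-\td-:\vsup \times \vsup \to \vsup$ sends a pair of (reflexive, split) coinverters to a coinverter. More precisely, apply $-\td \D\B$ to the coinverter $\D^2\A \rightrightarrows \D\A \to \A$ and $\D^2\A\td -$, etc.; since $-\td-$ is separately cocontinuous and coinverters are a species of colimit, each application yields a coinverter, and composing the two (horizontally) gives the claimed diagram. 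The reflexivity is inherited: the common section $\D\y_\A \td \D\y_\B$ splits $\mu_\A\td\mu_\B$ on the nose, because $\mu_\A\circ\D\y_\A = \id$ and $-\td-$ is a functor.

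The key steps, in order: (i) recall from~\eqref{eq:D-alg-as-coinverter} that $\sup{\A}{}:\D\A\to\A$ exhibits $\A$ as the reflexive coinverter of $\mu_\A \le \D\sup{\A}{}$ in $\vsup$, with splitting $\y_\A\dashv\sup{\A}{}$, $\y_{\D\A}\dashv\mu_\A$; (ii) note that $-\td\D\B$, being a left adjoint (closedness: $\vsup(\A\td\D\B,-)\cong\vsup(\A,\vsup(\D\B,-))$), preserves this coinverter, giving that $\sup{\A}{}\td\D\B:\D\A\td\D\B\to\A\td\D\B$ is the reflexive coinverter of $\mu_\A\td\D\B \le \D\sup{\A}{}\td\D\B$; (iii) symmetrically, $\A\td-$ preserves the coinverter presenting $\B$, so $\A\td\sup{\B}{}:\A\td\D\B\to\A\td\B$ is the reflexive coinverter of $\A\td\mu_\B \le \A\td\D\sup{\B}{}$; (iv) paste these two coinverters: a composite of coinverters along the shared object $\A\td\D\B$ is a coinverter of the composite $2$-cell on $\D\A\td\D\B$, and one checks the composite parallel pair is exactly $(\mu_\A\td\mu_\B, \D\sup{\A}{}\td\D\sup{\B}{})$ with the evident $2$-cell, using bifunctoriality of $\td$ to interchange; (v) verify reflexivity of the composite pair via the section $\D\y_\A\td\D\y_\B$, and verify naturality in $\A,\B$, which is automatic since every arrow in sight is natural.

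The main obstacle I expect is step (iv): justifying that a "horizontal composite" of two reflexive coinverters — one obtained by applying $-\td\D\B$, the other by applying $\A\td-$ — is again a coinverter, and that the resulting parallel pair is the stated one rather than some zig-zag. This is the $2$-categorical analogue of the classical fact that in $\vsup$ the tensor of two coequalizers is a coequalizer, and the cleanest route is probably to invoke directly that $-\td-$ preserves colimits in each variable separately and that coinverters, being a weighted colimit (a $2$-categorical colimit weighted by the generic $2$-cell), are preserved by such functors; then the "two-variable" coinverter is literally the colimit of the product diagram, computed by Fubini. One subtlety to handle carefully: coinverters in a $2$-category are colimits only in the appropriate enriched sense, so I would either appeal to~\cite{LeCreurerMarmolejoVitale02} for the relevant preservation statement in the KZ setting, or reduce to the underlying $\set$-level coequalizers since $\vsup$ is ordinary-monadic over $\set$ and the $2$-cells are mere inequalities (local posetality), which trivializes the distinction between coinverter and coequalizer once the parallel pair is fixed.
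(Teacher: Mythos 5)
Your proposal is correct and follows essentially the same route as the paper: tensor the canonical reflexive coinverter presentation~\eqref{eq:D-alg-as-coinverter} of $\A$ and of $\B$ using separate cocontinuity of $-\td-$, then combine. The one step you flag as the main obstacle --- passing from the two one-variable coinverters (whose parallel pairs live on $\D^{2}\A\td\D\B$ and $\A\td\D^{2}\B$) to the single diagonal pair on $\D^{2}\A\td\D^{2}\B$ --- is exactly what the paper resolves by invoking the $3\times 3$ lemma for reflexive coinverters of~\cite{KellyLack93}, which is the precise packaged form of the Fubini-plus-reflexivity argument you sketch in step (iv).
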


\begin{proof}

In the diagram below, each row is a reflexive coinverter in $\vsup$. 
This is because each $\D$-algebra can be obtained as a reflexive coinverter in $\vsup$ of free algebras~\eqref{eq:D-alg-as-coinverter}, and because tensoring $-\td-$ in $\vsup$ preserves colimits in each argument ($\vsup$ being symmetric monoidal closed). 
We can thus apply the 3x3 lemma of~\cite{KellyLack93} to conclude that the diagonal of the 3x3 diagram below is again a reflexive coinverter in $\vsup$:
\[
\raisebox{\dimexpr\depth-5\fboxsep}{\xymatrix@C=50pt@R=50pt{
\D^{2}\A \td \D^2 \B 
\ar@<+1.2ex>[r]^{\mu_\A \td \id} \ar@<-1.2ex>[r]_{\D\sup{\A}{}\td \id} \ar@{}[r]|{\downarrow}
\ar@<+1.2ex>[d]^{\id \td \D\sup{\B}{}} \ar@<-1.2ex>[d]_{\id \td \mu_\B} \ar@{}[d]|{\to}
&
\D\A \td \D^2\B
\ar[r]^{\sup{\A}{}\td \id}
\ar@<+1.2ex>[d]^{\id \td \D\sup{\B}{}} \ar@<-1.2ex>[d]_{\id \td \mu_\B} \ar@{}[d]|{\to}
&
\A \td \D^2\B
\ar@<+1.2ex>[d]^{\id \td \D\sup{\B}{}} \ar@<-1.2ex>[d]_{\id \td \mu_\B} \ar@{}[d]|{\to}
\\
\D^{2}\A \td \D \B 
\ar@<+1.2ex>[r]^{\mu_\A \td \id} \ar@<-1.2ex>[r]_{\D\sup{\A}{}\td \id} \ar@{}[r]|{\downarrow}
\ar[d]_{\id \td \sup{\B}{}}
&
\D\A \td \D\B
\ar[r]^{\sup{\A}{}\td \id}
\ar[d]|{\id \td \sup{\B}{}}
&
\A \td \D\B
\ar[d]^{\id \td \sup{\B}{}}
\\
\D^{2}\A \td \B 
\ar@<+1.2ex>[r]^{\mu_\A \td \id} \ar@<-1.2ex>[r]_{\D\sup{\A}{}\td \id} \ar@{}[r]|{\downarrow}
&
\D\A \td \B
\ar[r]^{\sup{\A}{}\td \id}
&
\A \td \B
}}
\]
\end{proof}

\begin{lemma}\label{lem:3x3}
The following diagram serially commutes:
\[
\xymatrix@C=50pt@R=40pt{
\D^{2}\A \td \D^2 \B 
\ar@<+1.2ex>[r]^{\mu_\A \td \mu_\B} \ar@<-1.2ex>[r]_{\D\sup{\A}{}\td \D\sup{\B}{}} \ar@{}[r]|{\downarrow}
\ar[d]_\cong
\ar@{}[dr]|\cong
&
\D\A \td \D\B
\ar[d]^\cong
\\
\D(\D\A \otimes\D\B) 
\ar@<+1.2ex>[r]^{\D_{-1}(\y_\A \otimes\y_\B)} \ar@<-1.2ex>[r]_{\D(\sup{\A}{}\otimes\sup{\B}{})} \ar@{}[r]|{\downarrow}
&
\D(\A \otimes\B)
}
\]
naturally in the separated cocomplete $\V$-categories $\A$ and $\B$. 
In the above, the unlabelled vertical isomorphisms witness the strong monoidal structure of the free functor $\vcat \to \vsup$. 
\end{lemma}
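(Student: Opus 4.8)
\emph{Proof strategy.} The plan is to recognise the square as two naturality squares for the strong monoidal comparison of the free functor $\D\colon\vcat\to\vsup$ --- one expressing compatibility with the covariant functoriality of $\D$, the other with the ``inverse image'' functoriality $\D_{-1}$ --- and then to let local posetality dispose of the $2$-cells. Write $m_{\X,\Y}\colon\D\X\td\D\Y\xrightarrow{\cong}\D(\X\otimes\Y)$ for the natural isomorphism exhibiting $\D$ as a strong monoidal functor; its existence is precisely the ``strong monoidal structure of the free functor'' referred to in the statement, furnished by the general theory of commutative monads recalled above. The two unlabelled vertical isomorphisms of the diagram are then $m_{\D\A,\D\B}$ on the left and $m_{\A,\B}$ on the right.

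The easy half concerns the pair of parallel arrows $\D\sup{\A}{}\td\D\sup{\B}{}$ and $\D(\sup{\A}{}\otimes\sup{\B}{})$. Since $m$ is natural in both variables, its naturality square at the $\V$-functors $\sup{\A}{}\colon\D\A\to\A$ and $\sup{\B}{}\colon\D\B\to\B$ is exactly
\[
m_{\A,\B}\circ(\D\sup{\A}{}\td\D\sup{\B}{})=\D(\sup{\A}{}\otimes\sup{\B}{})\circ m_{\D\A,\D\B},
\]
which is the required serial commutativity for this pair.

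For the pair built from the monad multiplications I would use the identity $\mu_\X=\D_{-1}\y_\X$ (computed just before~\eqref{eq:mu-as-inverse-image}) together with a mate argument. By the triple adjunction~\eqref{eq:Df:triple-adj}, $\D g\dashv\D_{-1}g$ for every $\V$-functor $g$, with both $\D g$ and $\D_{-1}g$ cocontinuous, so this is an adjunction in $\vsup$; since $(\vsup,\td)$ is symmetric monoidal closed and locally ordered, $-\td-$ is a $2$-functor in each variable and hence preserves adjunctions in each variable, giving $\D\y_\A\td\D\y_\B\dashv\mu_\A\td\mu_\B$, while also $\D(\y_\A\otimes\y_\B)\dashv\D_{-1}(\y_\A\otimes\y_\B)$. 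Now naturality of $m$ at $(\y_\A,\y_\B)$ reads $m_{\D\A,\D\B}\circ(\D\y_\A\td\D\y_\B)=\D(\y_\A\otimes\y_\B)\circ m_{\A,\B}$; passing to right adjoints on both sides (the right adjoint of an isomorphism being its inverse, and that of a composite the composite of the right adjoints in the reverse order) and rearranging, one obtains $m_{\A,\B}\circ(\mu_\A\td\mu_\B)=\D_{-1}(\y_\A\otimes\y_\B)\circ m_{\D\A,\D\B}$, the serial commutativity for the second pair. If one prefers, the same identity can be obtained directly by unwinding the explicit descriptions of $\D_{-1}g$, of $\mu_\X$ and of $m_{\X,\Y}$.

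It remains to match the $2$-cells. Since $\vcat$ and $\vsup$ are locally ordered $2$-categories, there is at most one $2$-cell between any ordered pair of parallel $1$-cells, and $m_{\A,\B}$, $m_{\D\A,\D\B}$ are order-isomorphisms; hence the top $2$-cell $\mu_\A\td\mu_\B\le\D\sup{\A}{}\td\D\sup{\B}{}$ (coming from $\mu_\A\le\D\sup{\A}{}$ of~\eqref{eq:D-alg-as-coinverter} and the monotonicity of $\td$ in each variable) is transported onto the bottom $2$-cell $\D_{-1}(\y_\A\otimes\y_\B)\le\D(\sup{\A}{}\otimes\sup{\B}{})$, and there is nothing more to verify; naturality of the whole square in $\A$ and $\B$ is inherited from that of $m$, of $\mu$, of the supremum functors, and of $\td$. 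The hard part, to which essentially all the content reduces, is the middle step: one must ensure that the strong monoidal comparison is compatible not merely with the covariant action of $\D$ on $\V$-functors but also with the right-adjoint (``inverse image'') action $\D_{-1}$, and this is exactly what the mate argument, or the explicit computation, delivers.
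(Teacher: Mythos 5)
Your proof is correct, and for the key identity it takes a genuinely different (and arguably slicker) route than the paper. The paper also handles the $\D\sup{\A}{}\td\D\sup{\B}{}$ versus $\D(\sup{\A}{}\ot\sup{\B}{})$ square by naturality of the monoidal comparison, exactly as you do. But for the $\mu$-square the paper proceeds in two steps: it first verifies by ``easy but tedious diagram chasing'' that the comparison intertwines $\mu_\A\td\mu_\B$ with $\mu_{\A\ot\B}\circ\D\d_{2,\D\A,\D\B}$ (i.e.\ the monoidal-monad compatibility of $\mu$ with $\d_2$), and then identifies $\mu_{\A\ot\B}\circ\D\d_2$ with $\D_{-1}(\y_\A\ot\y_\B)$ by observing that both are left adjoints of the same $\V$-functor. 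You instead bypass the monoidality-of-$\mu$ computation entirely: you take the naturality square of the comparison at the $\V$-functors $\y_\A,\y_\B$ (an instance of naturality that costs nothing) and pass to right adjoints, using $\D\y\dashv\mu$, $\D(\y_\A\ot\y_\B)\dashv\D_{-1}(\y_\A\ot\y_\B)$, and the fact that $\td$ preserves adjunctions in each variable. Both arguments ultimately rest on uniqueness of adjoints in the locally posetal setting, but yours trades the diagram chase for a one-line mate computation, at the price of having to check that all four arrows involved really are morphisms of $\vsup$ so that the adjunctions live there (they are: $\mu_\X=(\y_\X)^*\ot-$ is itself a left adjoint, hence cocontinuous). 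Your handling of the $2$-cells via local posetality and of naturality in $\A,\B$ is also fine and matches what the paper leaves implicit.
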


\begin{proof}
Denote by $\alpha : \D (-) \td \ \D(-) \cong \D(- \ot -)$ the natural isomorphism exhibiting the strong monoidality of the free functor. 
Easy but tedious diagram chasing shows that $\alpha_{\A,\B} \circ (\mu_{\A}\td \mu_\B) = (\mu_{\A \otimes \B} \circ \D\d_2)\circ \alpha_{\D\A,\D\B}$. 
Observing that $\mu_{\A \otimes \B} \circ \D \d_2$ and $\D_{-1}(\y_\A \otimes\y_\B)$ have common right adjoint 
proves $\mu_{\A \otimes \B} \circ \D \d_2\cong \D_{-1}(\y_\A \otimes\y_\B)$ (in fact, this is an equality) and consequently $\alpha_{\A,\B} \circ (\mu_{\A}\td \mu_\B) \cong \D_{-1}(\y_\A \otimes\y_\B)\circ \alpha_{\D\A,\D\B}$.  
The commutativity of the other square is simply given by the naturality of $\alpha$. 
\end{proof}

\begin{corollary}
For every cocomplete $\V$-categories $\A$ and $\B$, $\A \td \B$ is the coinverter in $\vsup$ of the reflexive pair below (with common section $\D(\y_\A \otimes\y_\B)$):
\begin{equation}\label{eq:td-as-coinverter}
\xymatrix@C=60pt{
\D(\D\A \otimes\D\B) 
\ar@<+1.2ex>[r]^{\D_{-1}(\y_\A \otimes\y_\B)} \ar@<-1.2ex>[r]_{\D(\sup{\A}{}\otimes\sup{\B}{})} \ar@{}[r]|{\downarrow}
&
\D(\A \otimes\B)
\ar[r]^q
&
\A \td \B
}
\end{equation}
\end{corollary}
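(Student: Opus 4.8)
The plan is to combine the previous two lemmas. By Lemma~\ref{lem:refl-coinverter}, the top row of the diagram in Lemma~\ref{lem:3x3} is a reflexive coinverter in $\vsup$ with colimit $\A \td \B$, with the common section being $\D\y_\A \td \D\y_\B$ and the coinverting map being $\sup{\A}{}\td \sup{\B}{}$. By Lemma~\ref{lem:3x3}, this reflexive coinverting pair is isomorphic (as a parallel pair equipped with its comparison $2$-cell, naturally in $\A$ and $\B$) to the bottom row of that diagram, namely the pair $\D_{-1}(\y_\A \otimes \y_\B), \D(\sup{\A}{}\otimes\sup{\B}{}) : \D(\D\A \otimes \D\B) \to \D(\A\otimes\B)$. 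Since coinverters are colimits, they are invariant under such an isomorphism of diagrams, so the bottom row is again a reflexive coinverter in $\vsup$ with the same colimit $\A\td\B$. It only remains to identify the common section of the bottom pair: transporting the section $\D\y_\A \td \D\y_\B$ of the top pair along the monoidal isomorphism $\alpha$ yields $\D(\y_\A \otimes \y_\B)$ (using the naturality of $\alpha$ together with the compatibility of $\alpha$ with the units, i.e. that $\alpha$ sends the free map on $\y_\A \otimes \y_\B$ to $\D$ applied to $\y_\A \otimes \y_\B$), which is precisely the stated section. The coinverting map $q$ is then $\sup{\A}{}\td\sup{\B}{}$ transported along $\alpha$, i.e. the composite $(\sup{\A}{}\td\sup{\B}{}) \circ \alpha_{\A,\B}^{-1}$; we simply name it $q$.

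Concretely, I would write: apply Lemma~\ref{lem:refl-coinverter} to obtain the reflexive coinverter presenting $\A\td\B$; apply Lemma~\ref{lem:3x3} to replace the parallel pair and the connecting $2$-cell by the isomorphic pair appearing on the bottom of its diagram; invoke the general fact that a colimit (here a coinverter, a particular weighted colimit) computed over a diagram is preserved by an isomorphism of diagrams, so the bottom pair has the same coinverter $\A\td\B$; and finally check that the reflexivity datum (the common section) transports to $\D(\y_\A\otimes\y_\B)$. Naturality in $\A$ and $\B$ is inherited from the naturality statements already recorded in Lemmas~\ref{lem:refl-coinverter} and~\ref{lem:3x3}.

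There is essentially no real obstacle here, since all the work has been done in the two lemmas; the only point requiring a line of care is the identification of the common section after transport along $\alpha$. This uses that $\alpha$ is compatible with the strong-monoidal unit constraints of the free functor $\vcat \to \vsup$ — equivalently, that the section $\D\y_\A \td \D\y_\B$ of the top pair corresponds under $\alpha$ to $\D(\y_\A \otimes \y_\B)$ — which follows from the description of $\d_{2}$ and the fact that $\y_{\X\otimes\Y} = \d_{2,\X,\Y}\circ(\y_\X\otimes\y_\Y)$ up to the relevant coherence. Given the paper's standing convention that all such $2$-cells in a quantale-enriched setting are equalities, this check reduces to an elementary (if slightly tedious) verification that I would relegate to a sentence.
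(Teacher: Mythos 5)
Your argument is exactly the paper's: the corollary is obtained by combining Lemma~\ref{lem:refl-coinverter} with Lemma~\ref{lem:3x3} and transporting the reflexive coinverter data along the strong-monoidal isomorphism $\alpha$, and your identification of the common section via naturality of $\alpha$ is the (routine) detail the paper leaves implicit. The proposal is correct and takes essentially the same route.
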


\begin{proof}
This follows from Lemma~\ref{lem:refl-coinverter} and Lemma~\ref{lem:3x3}. 
\end{proof}

We are now ready to provide the promised description of $-\td-$.

\begin{theorem}
For every cocomplete $\V$-categories $\A$ and $\B$, their tensor product $\A\td \B$ in $\vsup$ is the {\em coreflexive inverter} in $\vcat$ below  
\begin{equation}\label{eq:td-as-inverter}
\xymatrix@C=60pt{
\A \td \B \ar[r]^j
& 
\D(\A \otimes\B) 
\ar@<1.5ex>[r]^{\D(\y_\A \otimes\y_\B)}
\ar@{}[r]|{\downarrow}
\ar@<-1.5ex>[r]_{\D_\forall(\y_\A \otimes\y_\B)}
&
\D(\D\A \otimes\D\B) 
}
\end{equation}

\end{theorem}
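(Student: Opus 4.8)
The plan is to obtain the inverter presentation by taking right adjoints throughout the coinverter presentation of the preceding Corollary; this is available because $\vsup$ is \emph{total} --- every cocontinuous $\V$-functor between cocomplete $\V$-categories is a left adjoint --- and because $\D$ is a KZ monad.

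Write $f:=\y_\A\otimes\y_\B\colon\A\otimes\B\to\D\A\otimes\D\B$; it is fully faithful, being a tensor of fully faithful $\V$-functors. By the Corollary, $\A\td\B$ is the coinverter in $\vsup$ of the reflexive pair $\D_{-1}f,\ \D(\sup{\A}{}\otimes\sup{\B}{})\colon\D(\D\A\otimes\D\B)\to\D(\A\otimes\B)$, with comparison $q$ and common section $\D f$, the coinverting $2$-cell pointing $\D_{-1}f\le\D(\sup{\A}{}\otimes\sup{\B}{})$. First I would record the right adjoints of the three cocontinuous $\V$-functors involved: $q\dashv j$ for some $j\colon\A\td\B\to\D(\A\otimes\B)$; $\D_{-1}f\dashv\D_\forall f$, from the triple adjunction $\D f\dashv\D_{-1}f\dashv\D_\forall f$; and $\D(\sup{\A}{}\otimes\sup{\B}{})\dashv\D f$, obtained by applying the $2$-functor $\D$ to the adjunction $\sup{\A}{}\otimes\sup{\B}{}\dashv\y_\A\otimes\y_\B$ (itself the tensor of $\sup{\A}{}\dashv\y_\A$ and $\sup{\B}{}\dashv\y_\B$). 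Taking right adjoints reverses the coinverting $2$-cell into its mate $\D f\le\D_\forall f$ --- precisely the canonical $2$-cell of the UIAO at $f$ from the Remark --- and dualizes the common section $\D f$ into a common retraction $\D_{-1}f$ of the pair $(\D f,\D_\forall f)$, since $\D_{-1}f\circ\D f=\id=\D_{-1}f\circ\D_\forall f$ for fully faithful $f$; this is what makes the target inverter coreflexive.

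Next I would show that $j$ is fully faithful and identify its essential image with $\mathsf{Inv}(\D f,\D_\forall f)$. Because $\D$ is KZ, the coinverter presenting the algebra $\A\td\B$ is reflective --- equivalently, the universal bimorphism $\A\otimes\B\to\A\td\B$ is dense --- so $j$ is fully faithful, $qj\cong\id$, and $q$ exhibits $\A\td\B$ as a reflective sub-$\V$-category of $\D(\A\otimes\B)$, $q$ being the localization inverting the coinverting $2$-cell. A presheaf $\Phi$ on $\A\otimes\B$ then lies in the essential image of $j$ iff it is orthogonal to that $2$-cell, i.e.\ iff $\D(\A\otimes\B)(\alpha_\Psi,\Phi)$ is invertible for every $\Psi\in\D(\D\A\otimes\D\B)$, where $\alpha_\Psi$ is the relevant component. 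Transposing this across the adjunctions $\D_{-1}f\dashv\D_\forall f$ and $\D(\sup{\A}{}\otimes\sup{\B}{})\dashv\D f$ turns it into $\D(\D\A\otimes\D\B)(\Psi,\D f(\Phi))\cong\D(\D\A\otimes\D\B)(\Psi,\D_\forall f(\Phi))$ for all $\Psi$, the comparison being the mate $2$-cell; by the covariant Yoneda lemma this says exactly $\D f(\Phi)=\D_\forall f(\Phi)$, i.e.\ $\Phi\in\mathsf{Inv}(\D f,\D_\forall f)$. Since in the locally ordered $2$-category $\vcat$ this inverter is the replete full sub-$\V$-category on such $\Phi$, and $j$ is fully faithful with this essential image, $j\colon\A\td\B\to\D(\A\otimes\B)$ is the asserted coreflexive inverter; under separatedness $\A\td\B$ is then literally (isomorphic to) that full sub-$\V$-category and $j$ its inclusion.

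The main obstacle is the step just described: that the coinverter presenting $\A\td\B$ is reflective --- so that $j$ is fully faithful --- and that the local objects of this reflection are exactly the presheaves orthogonal to the single coinverted $2$-cell. Both are standard features of KZ monads and well-behaved $2$-categories, but they are the only input beyond formal manipulation of the triple adjunction $\D f\dashv\D_{-1}f\dashv\D_\forall f$; a clean way to encapsulate them is the general principle that, in a $2$-category, if $q$ is the coinverter of $\alpha\colon u\Rightarrow v$ and $q,u,v$ all admit right adjoints $j,u',v'$, then $j$ is the inverter of the mate $\widehat\alpha\colon v'\Rightarrow u'$, applied here with $u=\D_{-1}f$ and $v=\D(\sup{\A}{}\otimes\sup{\B}{})$. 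As a consistency check, when $\V=\two$ the condition $\D f(\Phi)=\D_\forall f(\Phi)$ on a downset $\Phi$ of $\A\times\B$ unwinds to closure of $\Phi$ under suprema in each variable separately, recovering the classical description of $\A\td\B$ by Shmuely's G-ideals.
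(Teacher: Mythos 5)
Your proposal is correct and follows essentially the same route as the paper: starting from the coinverter presentation of $\A\td\B$ in $\vsup$, passing to right adjoints and mates of the parallel pair (using $\D_{-1}f\dashv\D_\forall f$ and $\D(\sup{\A}{}\otimes\sup{\B}{})\dashv\D f$), and concluding that $j=q^{\mathrm{radj}}$ exhibits the inverter of $\D f\le\D_\forall f$. The paper packages the ``take right adjoints'' step as the duality $\vsup\cong\vsup\op$, which is exactly your general mate principle, so the two arguments coincide in substance.
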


\begin{proof}
First, observe that both arrows of the parallel pair in~\eqref{eq:td-as-coinverter} have right adjoints, namely 
\(
\D_{-1}(\y_\A \otimes\y_\B) \dashv \D_\forall (\y_\A \otimes\y_\B)
\), 
respectively 
\(
\D(\sup{\A}{} \otimes\sup{\B}{})\dashv \D(\y_\A \otimes\y_\B)
\), and that taking the mate of the inequality $\D_{-1}(\y_\A \ot \y_\B)\le \D(\sup{\A}{}\ot \sup{\B}{})$ produces $\D(\y_\A \otimes\y_\B) \le \D_\forall (\y_\A \otimes\y_\B)$.  
Next, extending the reasoning from~\cite{JoyalTierney1984} to arbitrary quantales, we see that there is a duality ${\vsup}^{\mathsf{op}}
\cong \vsup$, mapping a separated cocomplete $\V$-category $\A$ to the opposite one $\A\op$, and a cocontinuous $f:\A \to \B$ to its right adjoint (see also Section~\ref{sec:Galois}). 
This has the effect of computing colimits in $\vsup$ as limits in $\vcat$; more precisely, in our case, it means that $\A \td \B$ can be explicitly obtained as the coreflexive inverter of the parallel pair of right adjoints in $\vcat$, and that the $\V$-functor $j$ exhibiting the inverter is itself the (necessarily fully faithful) right adjoint of the quotient $\V$-functor $q$ of~\eqref{eq:td-as-coinverter} (see also~\cite{KenneyWood2010} for the case $\V=\two$).   
\end{proof}

\begin{remark}\label{rem:conseq}
We point out some consequences:
\begin{enumerate}
\item By Remark~\ref{rem:inverter}, $\A \td \B$ is a full $\V$-subcategory of $\D(\A \ot \B)$, hence it is separated. 
It has as objects those $\V$-contravariant presheaves $\xi$ on $\A \otimes\B$ satisfying 
\begin{equation}\label{eq:G-ideal}
\D(\A \otimes\B) (\d_{2,\A,\B}(\phi,\psi),\xi) = \xi (\sup{\A}{\phi},\sup{\B}{\psi})
\end{equation}
for all $\phi\in \D\A, \psi\in \D\B$. 
Explicitly, this means 
\begin{equation}\label{eq:G-ideal-1}
\bigwedge_{(a,b)} [\phi(a)\ot\psi(b),\xi(a,b)] = \xi (\sup{\A}{\phi},\sup{\B}{\psi})
\end{equation}
Readers familiar with order and lattice theory will recognise in the above the quantale-enriched version of the notion of $G$-ideal \cite{Shmuely74}. 
It may be instructive to compare it with the description of the tensor product of cocomplete $\V$-categories in \cite{JoyalTierney1984}, and in the more recent~\cite{EklundGutierrez-GarciaHohleKortelainen2018,Tholen2024}. 

\item The cocomplete $\V$-category $\A \td \B$ is not only a full $\V$-subcategory of $\D(\A \ot \B)$, but also reflective in $\D(\A \ot \B)$; consequently, limits in $\A \td \B$ are computed as in $\D(\A\otimes\B)$ (pointwise), while colimits are obtained by computing them in $\D(\A \otimes\B)$ and then applying the reflector $q$.

\item Any contravariant presheaf is canonically a colimit of representables. In particular, any $\xi \in \A \td \B$ can be written as  
\[
\xi 
= 
q(\xi) 
=
q\left(\bigvee_{(a,b)} \xi(a,b) \ot \y_{\A\ot \B}(a,b)\right)
=
\bigvee_{(a,b)} \xi(a,b) \ot (q \circ\y_{\A\ot \B})(a,b)
\]
In the above, the first join and tensor are computed in $\D( \A \ot \B)$, while the last ones are in $\A \td \B$.
This has the advantage as exhibiting the objects of $\A \td \B$ as weighted colimits of representables $q(\A(-,a)\ot \B(-,b))$. These are the quantale-enriched versions of the maps $L_b^a$ appearing in~\cite{Shmuely74}.

\item To gain some insight on objects of $\A \td \B$, let $\phi$ to be $\A(-,a)$ for some $a$ in $\A$, and $\psi = \bigvee_i \B(-,b_i)$ for some family of objects $(b_i)_i$ of $\B$. 
For an object $\xi$ in $\A \td \B$, relation~\eqref{eq:G-ideal-1} becomes $\bigwedge_i \xi(a,b_i)=\xi(a,\bigvee_i b_i)$. 
Similarly, $\bigwedge_j \xi(a_j,b)=\xi(\bigvee_j a_j, b)$ holds for any family $(a_j)_j$ of objects of $\A$ and any object $b$ of $\B$. 
Taking the index set to be empty, we obtain $\xi(a, \bot_\B) = \top = \xi(\bot_\A, b)$ for all $a,b$, where $\bot_\A$ denotes the least object in the underlying order of $\A$, respectively $\top_\B$ is the greatest object of $\B$, and $\top$ is the greatest element of the quantale $\V$ (absolute truth, if we interpret the elements of the quantale as logical values). 
Compare these relations with those in the definition of a G-ideal~\cite{Shmuely74}.

\end{enumerate}

\end{remark}


\paragraph{The tensor product classifies bimorphisms.} 
Although this follows from the general theory of commutative \linebreak algebraic theories (and monads), here we provide a direct proof. 
Let $i$ be the composite $\V$-functor 
\[
\xymatrix{\A \otimes\B \ar[r]^-{\y} & \D(\A \otimes\B) \ar[r]^{q} & \A \td \B}
\]

\begin{lemma}\label{lem:bimorf}
The following diagram commutes (strictly, as $\A \td \B$ is separated):
\[
\xymatrix{
\D\A \otimes\D\B 
\ar[r]^{\d_{2,\A,\B}} 
\ar[d]_{\sup{\A}{} \otimes\sup{\B}{}}
& 
\D(\A \otimes\B) 
\ar[d]^q
\\
\A \otimes\B 
\ar[r]^{i} 
& 
\A \td \B
}
\]
\end{lemma}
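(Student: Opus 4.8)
The plan is to establish the commutativity of the square by reducing it to the universal property of the coinverter in~\eqref{eq:td-as-coinverter}, together with the explicit description of the structure maps $\d_2$, $\y$ and $q$. First I would unwind the definitions: the top-right composite sends a pair $(\phi,\psi)\in\D\A\otimes\D\B$ to $q\bigl(\d_{2,\A,\B}(\phi,\psi)\bigr)=q\bigl(\phi(-)\ot\psi(-)\bigr)$, while the bottom-left composite sends it to $i(\sup{\A}{\phi},\sup{\B}{\psi})=q\bigl(\y_{\A\ot\B}(\sup{\A}{\phi},\sup{\B}{\psi})\bigr)=q\bigl(\A(-,\sup{\A}{\phi})\ot\B(-,\sup{\B}{\psi})\bigr)$. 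So the claim amounts to showing that $q$ identifies the presheaf $\d_{2,\A,\B}(\phi,\psi)$ with the representable-ish presheaf $\A(-,\sup{\A}{\phi})\ot\B(-,\sup{\B}{\psi})=\d_{2,\A,\B}(\y_\A\sup{\A}{\phi},\y_\B\sup{\B}{\psi})$.

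The cleanest route is to observe that $q$ coequalizes (coinverts) the pair $\D_{-1}(\y_\A\ot\y_\B)$ and $\D(\sup{\A}{}\ot\sup{\B}{})$ from~\eqref{eq:td-as-coinverter}, and that the two presheaves on $\A\ot\B$ that we must show $q$-equal are precisely the images of a single element $\Xi\in\D(\D\A\ot\D\B)$ under these two maps — namely $\Xi=\d_{2,\D\A,\D\B}$-type data built from $\phi,\psi$, or more directly the presheaf $\y_{\D\A\ot\D\B}(\phi,\psi)$ on $\D\A\ot\D\B$. Concretely: $\D(\sup{\A}{}\ot\sup{\B}{})$ applied to this element gives (after the co-Yoneda lemma and the formula $\D f=f_*\ot-$) the presheaf $(a,b)\mapsto\phi(a)\ot\psi(b)=\d_{2,\A,\B}(\phi,\psi)(a,b)$, using that $\sup{\A}{}\circ\y_\A=\id$ and $\sup{\B}{}\circ\y_\B=\id$ in the separated setting; while $\D_{-1}(\y_\A\ot\y_\B)=(\y_\A\ot\y_\B)^*\ot-$ applied to the same element yields $(a,b)\mapsto\A(-,\sup{\A}{\phi})\ot\B(-,\sup{\B}{\psi})$ evaluated suitably, i.e.\ exactly $i(\sup{\A}{\phi},\sup{\B}{\psi})(a,b)$. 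Since $q\circ\D_{-1}(\y_\A\ot\y_\B)=q\circ\D(\sup{\A}{}\ot\sup{\B}{})$ by the coinverter property, applying $q$ to both expressions gives the desired equality of the two composites of the square.

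I would make this precise by writing out the composite $q\circ\d_{2,\A,\B}\circ(\sup{\A}{}\ot\sup{\B}{})$ against $q\circ\D_{-1}(\y_\A\ot\y_\B)$ and $q\circ\D(\sup{\A}{}\ot\sup{\B}{})$ and matching the four structure $\V$-functors termwise; the key naturality input is that $\d_{2}$ is natural in both variables, so that $\d_{2,\A,\B}\circ(\sup{\A}{}\ot\sup{\B}{})=\D(\sup{\A}{}\ot\sup{\B}{})\circ\d_{2,\D\A,\D\B}$, which converts the left/bottom path of our square into the bottom row of~\eqref{eq:td-as-coinverter} precomposed by $\d_{2,\D\A,\D\B}$, and dually for the other leg. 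The main obstacle I anticipate is purely bookkeeping: keeping track of which Yoneda embedding ($\y_\A$, $\y_\B$, $\y_{\A\ot\B}$, $\y_{\D\A\ot\D\B}$) and which adjoint of $\D$ ($\D$, $\D_{-1}$, $\D_\forall$) appears where, and verifying that the element of $\D(\D\A\ot\D\B)$ witnessing the two sides is genuinely the same — but once the element is pinned down as $\d_{2,\D\A,\D\B}(\y_{\D\A}\phi,\ \y_{\D\B}\psi)$ (equivalently $\y_{\D\A\ot\D\B}(\phi,\psi)$), the coinverter property closes the argument with no further computation. Alternatively, and perhaps most economically for the write-up, one can simply invoke that $i$ classifies the universal bimorphism and that both composites represent the same $\V$-functor $\D\A\otimes\D\B\to\A\td\B$ because they agree after precomposition with $\y_\A\otimes\y_\B$ (where both reduce to $i$) and both are cocontinuous in each variable, hence equal by the universal property of $\D\A\otimes\D\B$ as a free cocompletion in each slot.
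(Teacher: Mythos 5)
Your central argument is correct but takes a genuinely different route from the paper. You derive the equality in one shot from the coinverter property of~\eqref{eq:td-as-coinverter}: evaluating the coinverted parallel pair at the representable $\y_{\D\A\otimes\D\B}(\phi,\psi)$ yields, on one leg, $\D_{-1}(\y_\A\otimes\y_\B)\bigl(\y_{\D\A\otimes\D\B}(\phi,\psi)\bigr)=\d_{2,\A,\B}(\phi,\psi)$ (by co-Yoneda and the Yoneda lemma) and, on the other, $\D(\sup{\A}{}\otimes\sup{\B}{})\bigl(\y_{\D\A\otimes\D\B}(\phi,\psi)\bigr)=\y_{\A\otimes\B}(\sup{\A}{\phi},\sup{\B}{\psi})$ (since $\D f\circ\y=\y\circ f$), so $q$ identifies them and the square commutes. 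Note that you have these two assignments swapped in your write-up (it is $\D_{-1}(\y_\A\otimes\y_\B)$, not $\D(\sup{\A}{}\otimes\sup{\B}{})$, that produces $(a,b)\mapsto\phi(a)\otimes\psi(b)$), and the displayed ``naturality'' equation $\d_{2,\A,\B}\circ(\sup{\A}{}\otimes\sup{\B}{})=\D(\sup{\A}{}\otimes\sup{\B}{})\circ\d_{2,\D\A,\D\B}$ does not typecheck — but neither slip affects the argument, since the two presheaves are being identified by $q$ regardless of which leg produces which. The paper instead proves the two inequalities separately: $i\circ(\sup{\A}{}\otimes\sup{\B}{})\ge q\circ\d_{2,\A,\B}$ by a mate calculation across the adjunctions $\sup{}\dashv\y$ and $q\dashv j$, and the reverse inequality by a direct element-wise computation using the $G$-ideal characterisation~\eqref{eq:G-ideal} of objects of $\A\td\B$. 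Your route is arguably cleaner (no pointwise computation, and it makes transparent \emph{why} $q$ must identify the two presheaves), while the paper's buys an explicit reconciliation with the $G$-ideal description that it exploits later. One caution: your closing ``most economical'' alternative — arguing that both composites are separately cocontinuous and agree on representables — is circular in the context of the paper, because the separate cocontinuity of $i$ (hence of $i\circ(\sup{\A}{}\otimes\sup{\B}{})$) is exactly Lemma~\ref{lem:bimorf2}, which is deduced \emph{from} the present lemma. Stick with the coinverter argument.
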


\begin{proof}
The inequality 
\[
i \circ (\sup{\A}{}\otimes\sup{\B}{}) 
\ge q \circ \d_{2,\A,\B}
\]
is the mate of the $2$-cell (inequality) in the diagram below: 
\[
\xymatrix{
\A \otimes\B \ar[r]^-{\y_{\A\otimes \B}}  \ar[d]_{\y_\A\ot\y_\B} 
\ar `u []+<5em,0cm>`[rr]^{i}[rr]
& \D(\A \otimes\B)\ar@{=}[dr] \ar[r]^{q} & \A \td\B \ar[d]^{j} 
\\
\D\A\ot\D\B \ar[rr]^{\d_{2,\A,\B}}  & \ar@{}[ur]|>>>>>{\nearrow} & \D(\A\ot\B)
}
\]
For the opposite inequality, notice that
\[
\begin{array}{lllll}
e 
&\le & 
\D(\A \ot \B)(\d_{2,\A,\B}(\phi, \psi), \d_{2,\A,\B}(\phi, \psi))
\\[10pt]
&\le &
\D(\A \ot \B) (\d_{2,\A,\B}(\phi, \psi), (q \circ \d_{2,\A,\B})(\phi, \psi))
\\[10pt]
&=& 
(q \circ \d_{2,\A,\B})(\phi, \psi)(\sup{\A}{\phi}, \sup{\B}{\psi})
\\[10pt]
&=&
\D(\A \ot \B)(\d_{2,\A,\B} \circ (\y_\A \ot \y_\B) \circ ( \sup{\A}{} \ot \sup{\B}{} )(\phi, \psi), (q \circ \d_{2,\A,\B})(\phi, \psi))
\end{array}
\]
holds for any $\phi\in \D\A$ and $\psi\in \D\B$.
Therefore $\d_{2,\A,\B} \circ (\y_\A \ot \y_\B)\circ (\sup{\A}{}\ot \sup{\B}{}) \le q \circ \d_{2,\A,\B}$ holds, from which $i \circ (\sup{\A}{}\otimes\sup{\B}{})  = q \circ \y_{\A \ot \B}\circ (\sup{\A}{}\ot \sup{\B}{}) \le q \circ \d_{2,\A,\B}$ follows. 

\end{proof}

\begin{lemma}\label{lem:bimorf2}
The $\V$-functor $i:\A \ot \B \to \A \td  \B$ is a bimorphism. 
\end{lemma}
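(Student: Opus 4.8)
The plan is to show that $i \colon \A \otimes \B \to \A \td \B$ is cocontinuous in each variable separately, i.e.\ that for each fixed $b \in \B$ the $\V$-functor $i(-,b) \colon \A \to \A \td \B$ preserves all colimits, and symmetrically for each fixed $a \in \A$. By symmetry of the construction it suffices to treat the first case. Since $\A$ is cocomplete, every colimit in $\A$ is a weighted colimit of the identity, so by~\eqref{eq:colim-join-tensor} it is enough to check that $i(-,b)$ preserves tensors and joins; equivalently, using~\eqref{eq:sup-as-join-of-tensor}, that $i(-,b)$ sends $\sup{\A}{\phi}$ to the $\V$-supremum in $\A \td \B$ of the presheaf $\phi$ transported along $i(-,b)$, for every $\phi \in \D\A$.

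First I would fix $b \in \B$ and consider the presheaf $\psi_b = \B(-,b) = \y_\B(b) \in \D\B$, noting $\sup{\B}{\psi_b} = b$ since $\sup{\B}{} \circ \y_\B = \id_\B$. Then for an arbitrary $\phi \in \D\A$, Lemma~\ref{lem:bimorf} gives $i(\sup{\A}{\phi}, b) = i(\sup{\A}{\phi}, \sup{\B}{\psi_b}) = (q \circ \d_{2,\A,\B})(\phi, \psi_b) = q(\d_{2,\A,\B}(\phi, \psi_b))$, where $\d_{2,\A,\B}(\phi,\psi_b)$ is the presheaf on $\A \otimes \B$ sending $(a,b') \mapsto \phi(a) \otimes \B(b',b)$. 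Now the key computation is to identify this presheaf, inside $\D(\A \otimes \B)$, as a weighted colimit of representables $\y_{\A \ot \B}(a,b) = \A(-,a)\ot\B(-,b)$ weighted by $\phi$: indeed $\d_{2,\A,\B}(\phi,\psi_b) = \bigvee_{a \in \A} \phi(a) \ot \bigl(\A(-,a)\ot\B(-,b)\bigr)$, by the co-Yoneda lemma applied in the first variable. Since $q$ is cocontinuous (it is the reflector onto $\A \td \B$, a left adjoint by the Theorem and Remark~\ref{rem:conseq}(2)) and $q \circ \y_{\A \ot \B} = i$, applying $q$ yields $i(\sup{\A}{\phi},b) = \bigvee_{a} \phi(a) \ot i(a,b)$, with the join and tensor now taken in $\A \td \B$. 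By the characterisation of colimits via joins and tensors this is exactly $\colim{\phi}{i(-,b)}$, so $i(-,b)$ is cocontinuous. The symmetric argument, with the roles of $\A$ and $\B$ exchanged and using $\sup{\A}{}\circ\y_\A = \id_\A$, shows $i(a,-)$ is cocontinuous for each fixed $a$.

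The main obstacle I anticipate is the bookkeeping around which colimits are computed in which category: the joins and tensors defining $\d_{2,\A,\B}(\phi,\psi_b)$ live in $\D(\A \otimes \B)$, whereas after applying $q$ they must be reinterpreted in $\A \td \B$, and one must invoke Remark~\ref{rem:conseq}(2) and~(3) to justify that $q$ genuinely preserves these (it does, being a left adjoint, so colimits in $\A\td\B$ are $q$ applied to colimits in $\D(\A\ot\B)$). A secondary subtlety is justifying the co-Yoneda step $\psi_b \ot \d_2$-decomposition: one should be careful that $\d_{2,\A,\B}$ is itself given by a left Kan extension and hence commutes with the relevant colimits in its first argument, or alternatively simply verify the pointwise formula $\bigvee_a \phi(a)\ot(\A(a',a)\ot\B(b',b)) = \phi(a')\ot\B(b',b)$ directly from the co-Yoneda lemma $\bigvee_a \phi(a) \ot \A(a',a) = \phi(a')$. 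Everything else is a routine consequence of Lemma~\ref{lem:bimorf} together with the cocontinuity of $q$ and the Yoneda identities $\sup{}{}\circ\y = \id$ recorded earlier.
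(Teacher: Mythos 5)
Your argument is correct and rests on the same two ingredients as the paper's proof, namely Lemma~\ref{lem:bimorf} and the fact that $q$ is a morphism in $\vsup$; the paper simply pastes the square of Lemma~\ref{lem:bimorf} with the naturality square $q\circ\sup{\D(\A\ot\B)}{}=\sup{\A\td\B}{}\circ\D q$ (in the form $\D i$) to get $i\circ(\sup{\A}{}\ot\sup{\B}{})=\sup{\A\td\B}{}\circ\D i\circ\d_{2,\A,\B}$ all at once, whereas you obtain the separate cocontinuity pointwise by specialising $\psi$ to a representable and invoking co-Yoneda. This is a presentational difference only; both proofs are sound.
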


\begin{proof}
In the diagram below, 
\[
\xymatrix{\D\A \otimes\D\B \ar[r]^{\d_{2,\A,\B}}\ar[d]_{\sup{\A}{}\otimes\sup{\B}{}}
&
\D(\A \otimes\B) \ar[r]^{\D i} \ar[d]^{q} & \D(\A \td \B) \ar[d]^{\sup{\A \td \B}{}}
\\
\A \otimes\B \ar[r]^{i}
&
\A \td \B \ar@{=}[r]
&
\A \td \B}
\]
the left square commutes by Lemma~\ref{lem:bimorf}, while the commutativity of the right square holds because $q$ is a morphism in $\vsup$. 
\end{proof}

\begin{proposition}
The $\V$-functor $i:\A \ot \B \to \A \td  \B$ is dense and point-separating with respect to the the forgetful functor $\vsup \to \vcat$.
\end{proposition}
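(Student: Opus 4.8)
The plan is to establish the two properties separately, exploiting the presentation of $\A \td \B$ as a coinverter in $\vsup$ from~\eqref{eq:td-as-coinverter} together with Lemma~\ref{lem:bimorf2}. For \emph{density} of $i$ with respect to the forgetful functor $\vsup \to \vcat$, the natural strategy is to show that every object $\xi$ of $\A \td \B$ is a colimit (in the $\vsup$-sense, i.e.\ weighted colimit in $\vcat$) of objects in the image of $i$. The key input is item~(3) of Remark~\ref{rem:conseq}: any $\xi \in \A \td \B$ can be written as $\xi = \bigvee_{(a,b)} \xi(a,b) \ot (q \circ \y_{\A \ot \B})(a,b)$, where the join and tensor are computed in $\A \td \B$. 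Since $q \circ \y_{\A \ot \B} = i$ by the very definition of $i$, this says precisely that $\xi$ is a colimit of the representable objects $i(a,b)$, weighted by the contravariant presheaf $\xi$ itself on $\A \ot \B$. That is exactly the statement that $i$ is dense (relative to the forgetful functor): the identity weighted colimit cocone on the image of $i$ computes every object, so the restricted Yoneda-type functor $\A \td \B \to [\, (\A\ot\B)\op, \V\,]$, $\xi \mapsto (\A\td\B)(i(-),\xi)$, is fully faithful. I would phrase this using~\eqref{eq:colim-join-tensor} and the defining property~\eqref{eq:sup} of suprema in $\A \td \B$, checking that $(\A \td \B)(i(a,b), \xi) = \xi(a,b)$ — which is essentially the Yoneda lemma composed with the fact that $j$ is fully faithful and $q \dashv j$.

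For \emph{point-separation}, the claim is that a cocontinuous $\V$-functor out of $\A \td \B$ is determined by its restriction along $i$; equivalently, that the cocontinuous functors $f, g : \A \td \B \to \C$ with $f \circ i = g \circ i$ must coincide. Here the coinverter presentation~\eqref{eq:td-as-coinverter} does the work: $q : \D(\A \ot \B) \to \A \td \B$ is (epi, indeed a coinverter hence) an effective quotient in $\vsup$, so cocontinuous functors out of $\A \td \B$ are in bijection with cocontinuous functors out of $\D(\A \ot \B)$ that coinvert the parallel pair. But $\D(\A \ot \B)$ is the free cocomplete $\V$-category on $\A \ot \B$, so cocontinuous functors out of it correspond (via precomposition with $\y_{\A \ot \B}$) to arbitrary $\V$-functors out of $\A \ot \B$. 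Chasing these identifications: $f, g$ agree iff $f \circ q$ and $g \circ q$ agree, iff $f \circ q \circ \y_{\A \ot \B}$ and $g \circ q \circ \y_{\A \ot \B}$ agree (freeness), iff $f \circ i$ and $g \circ i$ agree. This is the required point-separation. I would present it cleanly by invoking the universal property of the coinverter and the universal property of $\D$ as the free cocompletion.

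The main obstacle I anticipate is bookkeeping rather than conceptual difficulty: one must be careful about the distinction between colimits computed in $\D(\A \ot \B)$ versus those computed in the reflective subcategory $\A \td \B$ (the reflector $q$ must be applied, per item~(2) of Remark~\ref{rem:conseq}), and about the precise meaning of ``dense with respect to the forgetful functor'' — namely that density is being asserted at the level of the underlying $\V$-categories while the weighted colimits in question are $\vsup$-colimits. A secondary subtlety is that $i$ is a $\V$-functor from the genuine tensor $\A \ot \B$ (not from a cocomplete category), so ``colimit of representables'' must be interpreted as: the image of $i$ is a dense generator, i.e.\ $\A \td \B$ is the closure of $i(\A \ot \B)$ under $\vsup$-colimits, and the restricted nerve is fully faithful. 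Once these identifications are set up correctly, both verifications reduce to the Yoneda lemma~\eqref{eq:sup} and the universal properties already established, with no essentially new computation.
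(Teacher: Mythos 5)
Your proposal is correct, and the point-separation half is essentially identical to the paper's: both pass from $f\circ i=g\circ i$ to $f\circ q=g\circ q$ via the freeness of $\D(\A\ot\B)$ (cocontinuous functors agreeing on representables agree), and then cancel $q$ using $f=f\circ q\circ j$. For density you take a genuinely different, though equivalent, route. The paper runs a formal calculation with iterated left Kan extensions, $\Lan_i(i)=\Lan_{q\circ\y_{\A\ot\B}}(q\circ\y_{\A\ot\B})=\Lan_q\bigl(q\circ\Lan_{\y_{\A\ot\B}}(\y_{\A\ot\B})\bigr)=\Lan_q(q)=q\circ j=\id_{\A\td\B}$, using density of the Yoneda embedding and the fully faithful right adjoint $j$ of $q$. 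You instead invoke the ``canonical colimit of representables'' characterization: the nerve of $i$ computes as $(\A\td\B)(i(a,b),\xi)=\D(\A\ot\B)(\y_{\A\ot\B}(a,b),j\xi)=\xi(a,b)$ via $q\dashv j$ and Yoneda, and Remark~\ref{rem:conseq}(3) then exhibits every $\xi$ as the colimit of $i$ weighted by that nerve; in the quantale-enriched (posetal, separated) setting this does suffice for density without fussing over canonicity of the colimiting cocone. Both arguments rest on the same two inputs, density of $\y_{\A\ot\B}$ and the adjunction $q\dashv j$ with $j$ fully faithful, so neither is more general; the paper's is more compact, while yours makes the pointwise content explicit. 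Note that your computation actually shows the nerve of $i$ to be literally the fully faithful inclusion $j$, which gives an even shorter proof of density (a $\V$-functor is dense iff its nerve is fully faithful), and your cautionary remarks about where the joins and tensors are computed are exactly the ones resolved by Remark~\ref{rem:conseq}(2)--(3).
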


\begin{proof}
Because $\Lan_{\y_{\A \ot \B}} (q\circ \y_{\A \ot \B}) $ certainly exists, with identity unit $q\circ \y_{\A \ot \B} = \Lan_{\y_{\A \ot \B}} (q\circ \y_{\A \ot \B}) \circ \y_{\A \ot \B}$, the theory of iterated left Kan extensions gives
\[
\begin{array}{lllll}
\Lan_{i}(i) 
&=& 
\Lan_{q\circ \y_{\A \ot \B}}(q\circ \y_{\A \ot \B}) 
&=& \Lan_q \Lan_{\y_{\A \ot \B}} (q\circ \y_{\A \ot \B}) 
\\
&=& 
\Lan_q (q \circ \Lan_{\y_{\A \ot \B}} (\y_{\A \ot \B})) 
&=&
\Lan_q(q) 
\\
&=& q \circ j &=& \id_{\A \td \B}
\end{array}
\]
using that the Yoneda embedding $\y_{\A \otimes \B}$ is dense and that $q$ has a fully faithful right adjoint. 
Hence $i$ is dense.
For second statement, let $f,g:\A\td \B \to \C$ be morphisms in $\vsup$ such that $f \circ i = g \circ i$ holds. 
Then $f \circ q$ and $g \circ q$ are cocontinuous $\V$-functors which coincide on the representables. 
Consequently, $f\circ q = g \circ q$, which in turn implies $f = f\circ q\circ j = g\circ q\circ j = g$. 
\end{proof}

\begin{theorem}
The $\V$-functor $i:\A \ot \B \to \A \td  \B$ is the universal bimorphism.
\end{theorem}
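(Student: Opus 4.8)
The plan is to show that precomposition with $i$ gives a bijection between cocontinuous $\V$-functors $\A\td\B\to\C$ and bimorphisms $\A\ot\B\to\C$, functorially in $\C$; this, together with Lemma~\ref{lem:bimorf2} (which says $i$ itself is a bimorphism), establishes the universal property and hence that $i$ is the universal bimorphism. Let $h:\A\ot\B\to\C$ be an arbitrary bimorphism into a separated cocomplete $\V$-category $\C$. I must produce a unique cocontinuous $\bar h:\A\td\B\to\C$ with $\bar h\circ i = h$.

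First I would construct $\bar h$ using the inverter presentation~\eqref{eq:td-as-inverter} (or, dually, the coinverter presentation~\eqref{eq:td-as-coinverter}, whichever is more convenient). Since $h:\A\ot\B\to\C$ is in particular a $\V$-functor and $\C$ is cocomplete, the left Kan extension $\Lan_{\y_{\A\ot\B}}h : \D(\A\ot\B)\to\C$ exists and is cocontinuous; call it $\tilde h$. The bimorphism condition on $h$ is precisely what forces $\tilde h$ to coequalize (coinvert) the parallel pair in~\eqref{eq:td-as-coinverter}: unwinding definitions, $\tilde h\circ\D_{-1}(\y_\A\ot\y_\B)$ and $\tilde h\circ\D(\sup{\A}{}\ot\sup{\B}{})$ agree exactly because $h(\sup{\A}{\phi},\sup{\B}{\psi})$ computes as the iterated colimit $\bigvee_{a,b}\phi(a)\ot\psi(b)\ot h(a,b)$ — this is the separate-cocontinuity of $h$ combined with~\eqref{eq:colim-join-tensor}. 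Therefore $\tilde h$ factors uniquely through the quotient $q$ as $\tilde h = \bar h\circ q$ for a (necessarily cocontinuous, since $q$ is an epimorphism in $\vsup$ and colimits are computed via $q$ by Remark~\ref{rem:conseq}(2)) $\V$-functor $\bar h:\A\td\B\to\C$. Then $\bar h\circ i = \bar h\circ q\circ\y_{\A\ot\B} = \tilde h\circ\y_{\A\ot\B} = h$, using the identity unit of the Kan extension along the Yoneda embedding.

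For uniqueness: if $\bar h'$ is another cocontinuous $\V$-functor with $\bar h'\circ i = h$, then $\bar h'\circ q$ and $\bar h\circ q$ are both cocontinuous and agree on all representables $\y_{\A\ot\B}(a,b)$ (since $\bar h'\circ q\circ\y_{\A\ot\B} = \bar h'\circ i = h = \bar h\circ i = \bar h\circ q\circ\y_{\A\ot\B}$); because representables are dense in $\D(\A\ot\B)$ and cocontinuous functors are determined by their restriction to a dense subcategory, $\bar h'\circ q = \bar h\circ q$, and then $\bar h' = \bar h'\circ q\circ j = \bar h\circ q\circ j = \bar h$ using $q\circ j = \id$ from~\eqref{eq:td-as-inverter}. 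This is essentially the point-separating property already recorded in the preceding proposition, so I would simply cite it. Naturality in $\C$ is routine and follows from the uniqueness clause. The main obstacle I anticipate is the bookkeeping in the second step — verifying carefully that the bimorphism equation for $h$ is equivalent to $\tilde h$ coinverting (equivalently, coequalizing) the pair~\eqref{eq:td-as-coinverter}, i.e.\ that the $G$-ideal-type identity~\eqref{eq:G-ideal} is exactly the obstruction measured by that pair; everything else is a formal consequence of density, the universal property of a (co)inverter, and the already-established Lemmas~\ref{lem:bimorf}–\ref{lem:bimorf2}.
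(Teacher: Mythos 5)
Your proposal is correct and follows essentially the same route as the paper: extend the bimorphism $h$ to a cocontinuous $\Lan_{\y_{\A\ot\B}}h$ on $\D(\A\ot\B)$, factor it through $q$ using the (co)inverter presentation of $\A\td\B$, and get uniqueness from density of $i$ together with $q\circ j=\id$. The only divergence is in how the factorization through $q$ is justified — you check directly that the two legs of the coinverter pair~\eqref{eq:td-as-coinverter} are equalized on representables via separate cocontinuity and conclude by density, whereas the paper passes to right adjoints and verifies that $\C(g(-),-)$ lands in the inverter $\A\td\B$ (the G-ideal condition~\eqref{eq:G-ideal}); these are mates of the same computation.
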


\begin{proof}
We will show that 
\[
\vsup(\A \td \B,\C) \cong \vbisup(\A \otimes \B, \C)
\]
holds, naturally in the cocomplete $\V$-categories $\A,\B,\C$. 

First, by general abstract nonsense, given a cocontinuous $\V$-functor $f:\A\td \B \to \C$, precomposition with $i$ yields a bimorphism $f\circ i$. 

Conversely, let $g:\A \otimes \B \to \C$ be a bimorphism. Then $g$ uniquely extends to a cocontinuous $\V$-functor $\Lan_{\y_{\A \otimes \B}}(g):\D(\A \otimes \B) \to \C$. 
In particular (being cocontinuous), $\Lan_{\y_{\A \otimes \B}}(g)$ has a right adjoint, given by $\Lan_g (\y_{_{\A \otimes \B}}) \cong \C(g(-),-)$. 
Taking a second left Kan extension produces 
\[
\Lan_i (g) \cong \Lan_q \left(\Lan_{\y_{_{\A \otimes \B}}}(g)\right) \cong \Lan_{\y_{_{\A \otimes \B}}}(g) \circ j 
\]
Now $g$ factorises through $i$ if and only if $\Lan_{\y_{\A \ot \B}}(g)$ factorises through $q$, if and only if its right adjoint factorises through the inverter $j$. 
That is, if the image of $\C(g(-),-)$ is in $\A \td \B$. 
To see this, consider $c\in \C$ and $\phi\in \D\A, \psi \in \D\B$. 
We have:
\begin{align*}
& \C(g(\sup{\A}{\phi},\sup{\B}{\psi}),c) 
&&
= 
&&
\C (\sup{\C}{}\circ \D g \circ \d_2(\phi, \psi),c)
&&
\\
&&&
=
&&
\D \C(\D g \circ \d_2(\phi, \psi),\C(-,c))
&&
\\
&&&
=
&&
\D(\A \otimes \B)(\d_2(\phi, \psi), \D_{-1} g \circ \y_\C(c))
&&
\\
&&&
=
&&
\D(\A \otimes \B)(\d_2(\phi, \psi), \C(g(-,-),c)))
\end{align*}
where in the first line we used that $g$ is a bimorphism. 
\end{proof}

\begin{remark}
Unravelling the details of the above proof, we see that the inverse correspondences are given by: 
\[
\begin{array}{ccc}
    \vsup(\A \td \B,\C) 
    &
    \cong 
    &
    \vbisup(\A \otimes \B, \C)
    \\
    \mbox{%
    	$%
			\xymatrix@C=13pt{%
				\A\td \B 
				\ar[r]^-f 
				& 
				\C
			}
		$
    } 
    &
    \mapsto
    &
    \mbox{%
    	$%
			\xymatrix@C=13pt{%
				A \otimes \B 
				\ar[r]^-{i}
				& 
				\A\td \B \ar[r]^-f 
				& 
				\C
			}
		$
	}
	\\
	\mbox{%
    	$%
			\xymatrix@C=30pt{%
				\A \td \B 
				\ar[r]^-{j}
				& 
				\D(\A \otimes \B) 
				\ar[r]^-{\Lan_{\y_{\A \otimes \B}}(g)} 
				& 
				\C
			}
		$
	}	
	&
	\mapsfrom
	&
	\mbox{$\xymatrix@C=13pt{\A \otimes \B \ar[r]^-g & \C}$}
\end{array}
\]
\end{remark}


\paragraph{Tensor product of cocomplete $\V$-categories via Galois maps.}\label{sec:Galois}

Before ending this section, we shall discuss the link between the previous results and the theory of Galois connections~\cite{Mowat1968,Shmuely74,BanaschewskiNelson1976}.
It has been observed in~\cite[Section 3.1.4]{EklundGutierrez-GarciaHohleKortelainen2018} that $\vsup$ is a $*$-autonomous category, with internal hom representing bimorphisms (see also~\cite{Tholen2024}). 

\medskip

First, as it is the case for complete sup-lattices~\cite[Section II.1, Proposition~1]{JoyalTierney1984}, there is a duality of $2$-categories~\cite[Section II.1, Proposition~2]{JoyalTierney1984},~\cite[Section 2]{Stubbe2007a}
\begin{equation}\label{eq:duality}
 \vsup  \cong  {\vsup\op}
\end{equation}
sending a cocomplete $\V$-category $\A$ to $\A\op$, a cocontinuous $\V$-functor $f:\A \to \B$ to $g\op:\B\op \to 
\A\op$, where $g$ is the right adjoint of $f$,%
\footnote{The right adjoint $g$ exists because $f$ is cocontinuous and $\A$ is cocomplete, being given by $g=\Lan_f \id_\B$.
} %
and a $2$-cell $f\le_\B f' $ to ${g'} \le_\A g$, hence to $g \le_{\A\op} g'$. 
This has in particular the advantage of expressing colimits in $\vsup$ as limits in $\vsup$, which are in turn formed in $\vcat$ (pointwise) by monadicity of the forgetful functor. 

\medskip

Recall that $\vsup$ is symmetric monoidal closed with unit the cocomplete $\V$-category $\V$ and internal hom $\vsup(\A,\B)$, where the (pointwise) $\V$-category structure is inherited from $\vcat(\A,\B)$.\footnote{``In mathematical experience, closed structure appears more canonical than monoidal structure: we understand the vector space of linear maps more readily than the tensor product of vector spaces.''~\cite{HylandPower02}} %
In particular, $\A\cong \vsup(\V,\A)$ holds by the general theory of monoidal closed categories. 
Applying the duality~\eqref{eq:duality}, we obtain
\[
\A\op \cong \vsup(\V,\A\op)\cong \vsup(\A, \V\op)
\]
%
Unraveling the isomorphisms, we can see that the latter correspondence is given by (taking the opposite of) the contravariant Yoneda $a \in \A\op \mapsto 
g_a = \A(-,a):\A \to \V\op$, with inverse
\begin{align*}
&
g\in \vsup(\A,\V\op) \ 
&&
\mapsto
&&
a_g\in \A\op, 
a_g = \bigvee_x g(x)\otimes x
\end{align*} 
where the join and the tensors are computed in $\A$. 
In particular, notice that $e\le g(a_g)$ and $g(a_g) \otimes a_g\le a_g$ always hold, which in turn imply 
$a_g = %
g(a_g) \otimes a_g %
$. These relations, together with those of Remark~\ref{rem:conseq}, will be deferred for future investigation.

From a more categorical perspective, observe that all the above imply that $\vsup$ is not only symmetric monoidal closed, but in fact $*$-autonomous, with dualising object $\V\op$, because
\begin{align*}
\vsup(\vsup(\A,\V\op),\V\op) \cong \vsup(\A\op,\V\op) \cong \vsup(\V,\A) \cong \A
\end{align*}
By general $*$-autonomous category theory, it follows that the tensor product of cocomplete $\V$-categories can be expressed in terms of the internal hom as 
\[
\A \td \B \cong \vsup(\A,\B\op)\op
\]
One can also directly check the isomorphism above, using the explicit description of $\A\td B$ obtained in this section:

\begin{proposition}
The correspondences below induce an isomorphism of cocomplete $\V$-categories, between \linebreak $\vsup(\A,\B\op)\op$ (the $\V$-category of left adjoint maps) and the tensor product $\A\td \B$ (whose objects are the analogue of Shmuely's G-ideals)
\[
\xymatrix@R=5pt{
\vsup(\A,\B\op)\op \ar@{}[r]|-{\cong} & \A\td \B
\\
f \ar@{}[r]|-{\mapsto} & \xi=\B(-,f-)
\\
f=\bigvee_{b\in \B} \xi(-,b)\otimes b & \xi \ar@{}[l]|-{\mapsfrom}
}
\]
\end{proposition}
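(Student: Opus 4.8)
The plan is to verify directly that the two stated correspondences are mutually inverse $\V$-functors and that they are well-defined, i.e.\ land where claimed. The starting point is the explicit description of $\A \td \B$ from Remark~\ref{rem:conseq}: an object of $\A \td \B$ is a contravariant presheaf $\xi$ on $\A \otimes \B$ satisfying the G-ideal condition~\eqref{eq:G-ideal-1}, and in particular $\bigwedge_j \xi(a_j, b) = \xi(\bigvee_j a_j, b)$ and $\bigwedge_i \xi(a, b_i) = \xi(a, \bigvee_i b_i)$ for all families in $\A$ and $\B$. First I would take a left adjoint $f \in \vsup(\A, \B\op)$ — equivalently a cocontinuous $\V$-functor $f : \A \to \B\op$, i.e.\ a $\V$-functor $\A \to \B$ turning joins in $\A$ into meets in $\B$, and similarly turning the $\V$-tensor $v \otimes a$ into the $\V$-cotensor $[v, f(a)]$ in $\B$ — and set $\xi = \B(-, f-) : (\A \otimes \B)\op \to \V$, that is $\xi(a,b) = \B(b, f(a))$.

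The first substantive step is to check $\xi \in \A \td \B$. That $\xi$ is a contravariant presheaf on $\A \otimes \B$ is immediate from $f$ being a $\V$-functor and $\B(-,-)$ being a distributor. For the G-ideal condition~\eqref{eq:G-ideal-1} I would compute, using the join-tensor formula~\eqref{eq:sup-as-join-of-tensor} for $\sup{\A}{\phi} = \bigvee_a \phi(a) \otimes a$ and cocontinuity of $f$:
\[
\xi(\sup{\A}{\phi}, \sup{\B}{\psi}) = \B\!\left(\textstyle\bigvee_b \psi(b) \otimes b,\ f(\textstyle\bigvee_a \phi(a) \otimes a)\right),
\]
then push the join/tensor in the first variable of $\B$ out as a meet/internal-hom, push $f$ past the join/tensor in $\A$ (turning it into a meet/cotensor in $\B$), and use the universal property of the cotensor. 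Each manipulation produces exactly one factor $[\phi(a), -]$ or $[\psi(b), -]$, and collecting them yields $\bigwedge_{(a,b)} [\phi(a) \otimes \psi(b), \B(b, f(a))] = \bigwedge_{(a,b)}[\phi(a)\otimes\psi(b),\xi(a,b)]$, which is~\eqref{eq:G-ideal-1}. Conversely, given $\xi \in \A \td \B$, define $f = \bigvee_{b} \xi(-,b) \otimes b : \A \to \B$ (a join of tensors, hence a well-defined $\V$-functor $\A \to \B$) and check it is cocontinuous as a map $\A \to \B\op$: this is precisely the content of the two "partial" G-ideal identities in Remark~\ref{rem:conseq}, which say $\xi$ sends joins in $\A$ to meets (and tensors to cotensors) in the appropriate variable.

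Next I would verify the two round-trips. Starting from $f$, forming $\xi = \B(-, f-)$, then forming $\bigvee_b \xi(-,b) \otimes b = \bigvee_b \B(b, f(-)) \otimes b$, this recovers $f$ by the $\V$-enriched co-Yoneda / density argument: every object $f(a)$ of $\B$ is the colimit of representables weighted by $\B(-, f(a))$, i.e.\ $f(a) = \bigvee_b \B(b, f(a)) \otimes b$ since $\B$ is cocomplete and $\y'_\B$ is dense. Starting from $\xi \in \A \td \B$, forming $f = \bigvee_b \xi(-,b) \otimes b$, then forming $\B(-, f-)$, I compute $\B(b', f(a)) = \B(b', \bigvee_b \xi(a,b)\otimes b) = \bigwedge_b [\xi(a,b), \B(b', b)]$ using that the join-of-tensors in $\B\op$ becomes a meet-of-internal-homs in $\B$; this equals $(\xi \swarrow \y'_\B)(a, b')$, and the G-ideal/Yoneda property of $\xi$ collapses it back to $\xi(a, b')$. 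Finally I would note both correspondences are monotone for the underlying orders (hence $\V$-functorial, since both sides are separated and the hom-objects are computed pointwise in $\vcat$), and that they respect the $\V$-category structures on $\vsup(\A,\B\op)\op$ and on $\A \td \B \subseteq \D(\A \otimes \B)$.

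The main obstacle I anticipate is bookkeeping the variance: one must keep straight that cocontinuity of $f : \A \to \B\op$ means $f$ sends colimits in $\A$ to limits in $\B$, that $\vsup(\A, \B\op)\op$ carries the opposite of the pointwise order, and that $\xi$ lives in $\D(\A \otimes \B) = [(\A \otimes \B)\op, \V]$; a sign error in any of these will flip an inequality. Once the dictionary "join $\leftrightarrow$ meet, tensor $\leftrightarrow$ cotensor, $\swarrow \leftrightarrow$ join-of-tensors" is set up cleanly, each verification is a short computation with the universal properties of tensors, cotensors and right extensions recalled in Section~\ref{sec:prelim}, and the only real content is recognizing the G-ideal identities of Remark~\ref{rem:conseq} as exactly the separate-cocontinuity conditions.
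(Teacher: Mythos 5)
Your overall strategy --- a direct verification that the two assignments are well defined and mutually inverse --- is exactly what the paper intends (its proof consists of the single sentence that this is ``easy to see''), and your treatment of the direction $f\mapsto \xi=\B(-,f-)$ is correct: the G-ideal identity~\eqref{eq:G-ideal-1} for $\xi$ follows from cocontinuity of $f:\A\to\B\op$ by the join/meet and tensor/cotensor bookkeeping you describe, and the round trip $f\mapsto\xi\mapsto\bigvee_b\B(b,f(-))\otimes b=f$ is just $\sup{\B}{}\circ\y_\B=\id_\B$.

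The other round trip, however, contains a genuine error. You compute $\B(b',\bigvee_b\xi(a,b)\otimes b)=\bigwedge_b[\xi(a,b),\B(b',b)]$, but that formula computes a hom \emph{out of} the weighted colimit $\bigvee_b\xi(a,b)\otimes b$ (and even then with the arguments in the other order); there is no such formula for a hom \emph{into} a weighted colimit, which is what $\B(b',f(a))$ is. Moreover $\bigwedge_b[\xi(a,b),\B(b',b)]$ is not equal to $\xi(a,b')$: already for $\V=\two$ it asserts that $b'$ is a \emph{lower} bound of the downset $\xi(a,-)$, not a member of it. The missing idea is that the G-ideal condition forces each slice $\xi(a,-)$ to be \emph{representable}. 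Instantiating~\eqref{eq:G-ideal} at $\phi=\A(-,a)$ gives, via Yoneda in the first variable, $\D\B(\psi,\xi(a,-))=\xi(a,\sup{\B}{\psi})$ for every $\psi\in\D\B$; taking $\psi=\xi(a,-)$ yields $e\le\xi(a,f(a))$ with $f(a)=\sup{\B}{\xi(a,-)}$, whence $\B(b',f(a))\le\B(b',f(a))\otimes\xi(a,f(a))\le\xi(a,b')$ because $\xi$ is a presheaf on $\A\otimes\B$, while the reverse inequality $\xi(a,b')\le\B(b',\sup{\B}{\xi(a,-)})$ is the unit of $\sup{\B}{}\dashv\y_\B$. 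Hence $\xi(a,-)=\B(-,f(a))$ and the round trip closes. This representability is also what your cocontinuity argument silently needs: the partial identities only give $f(\bigvee_ja_j)=\sup{\B}{\bigwedge_j\xi(a_j,-)}$, and $\sup{\B}{}$ does not commute with meets of arbitrary presheaves --- only after rewriting $\bigwedge_j\xi(a_j,-)=\bigwedge_j\B(-,f(a_j))=\B(-,\bigwedge_jf(a_j))$ do you obtain $f(\bigvee_ja_j)=\bigwedge_jf(a_j)$, and similarly for tensors.
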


\begin{proof} 
It is easy to see that the above correspondences are indeed well defined and inverses to each other. 
\end{proof} 


\section{Nuclearity in $\vsup$}\label{sec:Vccd}


\paragraph{Nuclearity in symmetric monoidal closed categories.} 
Grothendieck introduced in Functional Analysis the concept of nuclearity for objects and morphisms, in order to reproduce {finite dimensionality} behaviour (for objects) and matrix calculus (for arrows) \cite{Grothendieck1955}.
At some point later, it has been observed that the concept of nuclearity is, in fact, essentially categorical in nature~\cite{Rowe1988}: %
In a symmetric monoidal closed category, an arrow $f:\A \to \B$ is called {\em nuclear} if the associated $\one \to 
[\A,\B]$\footnote{Sometimes called the {\em name} of $f$.} factorises through the canonical arrow $\B \otimes \A^*\to[\A, \B]$, where $\one$ is the unit for the tensor product and $\A^{*}=[\A,\one]$: 
\[
\xymatrix@C=50pt{
\one 
\ar@{.>}[r]
& 
\B \otimes \A^* 
\ar[r] 
& 
[\A,\B]
\ar@{<-}`u[ll]`[ll]
}
\]
An object $\A$ is called {\em nuclear} if $\id_{\A}$ is so~\cite{HiggsRowe1989}. 
Nuclear objects are also called {\em dualizable}.
Trivially, the unit $\one$ is always nuclear. 
We recall below some results on nuclearity of objects. References are~\cite{AbramskyBlutePanangaden1999,HiggsRowe1989,kelly1972-lnm281,KellyLaplaza1980,Rowe1988}:

\begin{lemma}
In a symmetric monoidal closed category, an object $\A$ is nuclear if and only if one of the following equivalent conditions holds:

\begin{enumerate}

\item The canonical morphism $\B \otimes [\A, \C] \to [\A, \B \otimes \C]$ is an isomorphism for all $\B$ and $\C$.

\item The canonical morphism $\B \otimes \A^*\to [\A,\B]$ is an isomorphism for all $\B$.

\item The canonical map $\A\otimes \A^* \to [\A,\A]$ is an isomorphism.

\item $\A$ has a right dual $\A^*$ (necessarily isomorphic to $[\A, \one]$), with unit $\one \to \A \otimes \A^*$ and  counit $\A^* \otimes \A \to \one$ satisfying the usual triangle identites. 
\end{enumerate}

\end{lemma}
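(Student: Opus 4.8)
The plan is to establish the equivalences by running the cycle
\[
(1)\Longrightarrow(2)\Longrightarrow(3)\Longrightarrow A\text{ nuclear}\Longrightarrow(4)\Longrightarrow(1),
\]
exploiting that all four comparison morphisms in sight are transposes of evaluation maps, so that most arrows in the cycle are mere specializations and naturality statements. For $(1)\Rightarrow(2)$ I would put $\C=\one$ and use $\B\ot\one\cong\B$ together with $\A^{*}=[\A,\one]$; for $(2)\Rightarrow(3)$ I would put $\B=\A$; in each case naturality of the canonical map guarantees that the specialized arrow is again the named canonical one. The implication $(3)\Rightarrow A$ nuclear is immediate, since ``nuclear'' for $\id_\A$ means precisely that the name of $\id_\A$ factors through $\A\ot\A^{*}\to[\A,\A]$, and if this map is invertible the factorization exists trivially.

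The substantial step is $A$ nuclear $\Rightarrow(4)$. Write $\varepsilon\colon\A^{*}\ot\A\to\one$ for the evaluation (the transpose of $\id_{[\A,\one]}$) and let $\eta\colon\one\to\A\ot\A^{*}$ be a morphism through which the name of $\id_\A$ factors along the canonical $\A\ot\A^{*}\to[\A,\A]$. Transposing that factorization equation along $-\ot\A\dashv[\A,-]$ turns it, modulo the unit and associativity coherence isomorphisms, into the first triangle identity $(\id_\A\ot\varepsilon)\circ(\eta\ot\id_\A)=\id_\A$. For the second triangle identity $(\varepsilon\ot\id_{\A^{*}})\circ(\id_{\A^{*}}\ot\eta)=\id_{\A^{*}}$ I would argue that it is automatic: since $\A^{*}=[\A,\one]$, two parallel maps into $\A^{*}$ agree once their transposes $(-)\ot\A\to\one$ agree, and transposing the left-hand side and simplifying with the first triangle identity (a short ``yanking'' computation using the symmetry) yields exactly $\varepsilon$, which is the transpose of $\id_{\A^{*}}$. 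Thus $(\eta,\varepsilon)$ exhibits $\A^{*}$ as a right dual of $\A$, and $\A^{*}\cong[\A,\one]$ is then forced by uniqueness of duals.

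Finally, $(4)\Rightarrow(1)$: a right dual makes $-\ot\A^{*}$ right adjoint to $-\ot\A$, hence canonically isomorphic to $[\A,-]$, and then for all $\B,\C$
\[
\B\ot[\A,\C]\cong\B\ot\A^{*}\ot\C\cong\A^{*}\ot(\B\ot\C)\cong[\A,\B\ot\C]
\]
by associativity and symmetry. It remains to check that this composite is the named canonical comparison morphism, a diagram chase with the unit and counit of the dual; this closes the cycle and gives all the asserted equivalences.

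I expect the only genuine obstacle to be the ``free'' second triangle identity inside $A$ nuclear $\Rightarrow(4)$: unlike the rest of the cycle it is not a formal specialization but a real manipulation of coherence isomorphisms and the symmetry, and one must be careful about the placement of associators and unitors (string diagrams make it routine, but an elementwise-free write-up demands some discipline). A milder nuisance is the bookkeeping in $(4)\Rightarrow(1)$ needed to confirm that the isomorphism produced is the canonical map rather than merely some isomorphism.
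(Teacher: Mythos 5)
The paper does not prove this lemma at all: it is stated as a recollection of known facts, with the proof deferred to the cited references (Rowe, Higgs--Rowe, Kelly--Laplaza, etc.), so there is no in-paper argument to compare yours against. On its own merits your proposal is correct and is essentially the classical argument from those references. The cycle $(1)\Rightarrow(2)\Rightarrow(3)\Rightarrow\text{nuclear}\Rightarrow(4)\Rightarrow(1)$ is well chosen, and the one step you rightly single out as substantial --- that for $\eta\colon\one\to\A\ot\A^*$ obtained from the nuclearity factorisation and $\varepsilon\colon\A^*\ot\A\to\one$ the evaluation, the second triangle identity comes for free from the first --- does go through: testing the two maps $\A^*\to\A^*=[\A,\one]$ by transposing against $\A$, the transpose of $(\varepsilon\ot\id_{\A^*})\circ(\id_{\A^*}\ot\eta)$ rewrites, via the interchange law and unit coherence, as $\varepsilon\circ\bigl(\id_{\A^*}\ot[(\id_\A\ot\varepsilon)\circ(\eta\ot\id_\A)]\bigr)$, which the first triangle identity collapses to $\varepsilon$, the transpose of $\id_{\A^*}$. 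This is exactly the point that makes ``the name of $\id_\A$ merely \emph{factors} through $\A\ot\A^*\to[\A,\A]$'' equivalent to genuine dualizability, and it is the content of the classical treatment. The remaining steps (specialising $\C=\one$ and $\B=\A$, and checking in $(4)\Rightarrow(1)$ that the composite isomorphism is the canonical comparison) are the routine naturality verifications you describe; writing them out would complete a fully self-contained proof that the paper chose to omit.
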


\begin{lemma}\label{lem:properties-nuclear}
In a symmetric monoidal closed category, if $\A, \B$ are nuclear, then so are $\A \otimes \B$ and $[\A,\B]$ (hence also $\A^*$). Retracts of nuclear objects are again nuclear. 
\end{lemma}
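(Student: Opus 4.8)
The plan is to verify each closure property directly from the characterisations of nuclearity recalled in the previous lemma, working in an arbitrary symmetric monoidal closed category and using condition~(4), the existence of a right dual, as the most convenient criterion to check. First I would treat the tensor product: if $\A$ has right dual $\A^*$ and $\B$ has right dual $\B^*$, then $\A^* \otimes \B^*$ is a right dual for $\A \otimes \B$, with unit $\one \to (\A\otimes\B)\otimes(\A^*\otimes\B^*)$ and counit obtained by composing the two given units (resp.\ counits) and inserting the symmetry isomorphisms to shuffle the factors into the correct order; the two triangle identities for the composite then follow from the triangle identities for $\A$ and $\B$ together with coherence for the symmetry. This is the standard fact that dualizable objects are closed under tensor in any symmetric monoidal category.

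Next I would handle the internal hom $[\A,\B]$. Using condition~(2) of the previous lemma, nuclearity of $\A$ gives $[\A,\B] \cong \B \otimes \A^*$, and $\A^* = [\A,\one]$ is itself a right dual, hence nuclear (its double dual is $\A$, and a right dual of a dualizable object is dualizable). So $[\A,\B]$ is a tensor product of two nuclear objects, $\B$ and $\A^*$, and is therefore nuclear by the first part. The parenthetical claim that $\A^*$ is nuclear is thus already subsumed; alternatively one notes directly that if $\A$ has right dual $\A^*$ then $\A$ is a right dual of $\A^*$, so $\A^*$ satisfies condition~(4).

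For retracts, suppose $\A$ is nuclear and $r:\A \to \C$, $s:\C \to \A$ satisfy $r\circ s = \id_\C$. I would check condition~(1): the canonical morphism $\B \otimes [\C,\D] \to [\C, \B\otimes\D]$ is a retract, naturally in everything, of the corresponding morphism $\B\otimes[\A,\D] \to [\A,\B\otimes\D]$, the retraction data being induced by $r$ and $s$ via functoriality of $[-,-]$ and $\B\otimes-$. Since the latter morphism is an isomorphism by nuclearity of $\A$, and a retract of an isomorphism in a category where the relevant idempotent splits (here it splits by hypothesis) is again an isomorphism, we conclude that the former is an isomorphism, so $\C$ is nuclear. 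I expect the main obstacle to be purely bookkeeping: writing down the unit/counit for $\A^*\otimes\B^*$ with the symmetry isomorphisms placed correctly and then discharging the triangle identities by coherence — conceptually routine, but the string of associators and symmetries is easy to get wrong, so I would either draw the string diagrams or simply cite the standard lemma that dualizable objects form a symmetric monoidal subcategory.
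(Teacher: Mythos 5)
Your argument is correct, but note that the paper offers no proof of this lemma at all: it is stated as recalled background, with the justification delegated to the cited references on nuclearity in symmetric monoidal closed categories. Your proof is the standard one found there: closure of dualizable objects under tensor via the shuffled unit/counit, $[\A,\B]\cong \B\otimes \A^*$ together with the fact that $\A^*$ is itself dualizable (with dual $\A$), and the retract statement via naturality of the canonical comparison map. One small simplification: in the retract step you do not need any idempotent-splitting hypothesis — a retract of an isomorphism in the arrow category is always an isomorphism (if $p\circ i=\id$ and $q\circ j=\id$ exhibit $f$ as a retract of an iso $g$, then $p\circ g^{-1}\circ j$ is a two-sided inverse of $f$). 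With that caveat removed, the argument is complete and matches what the cited sources do.
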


\begin{proposition}
For any symmetric monoidal closed category, the full subcategory of nuclear objects is compact closed.
\end{proposition}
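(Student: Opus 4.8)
The statement to prove is that, in a symmetric monoidal closed category $\mathcal C$, the full subcategory $\mathcal N$ of nuclear objects is compact closed; that is, $\mathcal N$ is a symmetric monoidal category in which every object has a dual, the closed structure then coming for free via $[\A,\B]\cong \A^{*}\otimes\B$. The plan is therefore in two movements: first exhibit $\mathcal N$ as a symmetric monoidal subcategory of $\mathcal C$, then show that inside $\mathcal N$ every object is dualizable.

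For the first movement I would argue as follows. The monoidal unit $\one$ is nuclear, so $\one\in\mathcal N$; and by Lemma~\ref{lem:properties-nuclear} the tensor product of two nuclear objects is nuclear, so $\otimes$ restricts to a functor $\mathcal N\times\mathcal N\to\mathcal N$. Since $\mathcal N$ is a \emph{full} subcategory, the components of the associativity, left/right unit and symmetry isomorphisms of $\mathcal C$, evaluated at objects of $\mathcal N$, are automatically morphisms of $\mathcal N$, and the pentagon, triangle and hexagon axioms — being commuting diagrams in $\mathcal C$ all of whose vertices now lie in $\mathcal N$ — continue to hold in $\mathcal N$. Hence $(\mathcal N,\otimes,\one)$ is symmetric monoidal and the inclusion $\mathcal N\hookrightarrow\mathcal C$ is a strict symmetric monoidal functor.

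For the second movement, fix $\A\in\mathcal N$. By the characterisation of nuclearity recalled above — specifically the condition that a nuclear object has a right dual $\A^{*}\cong[\A,\one]$ with unit $\one\to\A\otimes\A^{*}$ and counit $\A^{*}\otimes\A\to\one$ satisfying the triangle identities — $\A$ is dualizable in $\mathcal C$. Now Lemma~\ref{lem:properties-nuclear} (taken with $\B=\one$) gives $\A^{*}\in\mathcal N$, and the first movement gives $\A\otimes\A^{*},\A^{*}\otimes\A\in\mathcal N$, so the unit and counit are arrows of $\mathcal N$; the triangle identities, once more, are equations between composites in $\mathcal C$ passing only through objects of $\mathcal N$, hence hold in $\mathcal N$. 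Thus every object of $\mathcal N$ has a dual, which is exactly what compact closedness requires. Finally, to see that $\mathcal N$ is also closed in the usual sense — so that the closed structure of $\mathcal C$ restricts to it as well — one checks that $[\A,\B]\in\mathcal N$ for $\A,\B\in\mathcal N$ (Lemma~\ref{lem:properties-nuclear} again) and that, by fullness together with $\C\otimes\A\in\mathcal N$, the ambient adjunction restricts: $\mathcal N(\C,[\A,\B])\cong\mathcal C(\C,[\A,\B])\cong\mathcal C(\C\otimes\A,\B)\cong\mathcal N(\C\otimes\A,\B)$, so $[\A,\B]$ is the internal hom of $\mathcal N$ (necessarily $\cong\A^{*}\otimes\B$).

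As for difficulty, there is no genuine obstacle: essentially all the mathematical content is already packaged in Lemma~\ref{lem:properties-nuclear} and in the dualizability characterisation of nuclear objects. The one thing that needs care is bookkeeping — before invoking fullness to transport a coherence diagram, a triangle identity, or an adjunction isomorphism into $\mathcal N$, one must have already verified that every object occurring in it (each tensor factor, each internal hom, the unit) is nuclear — so the verifications should be organised in the right order: unit and tensor closure first, then duals and internal homs.
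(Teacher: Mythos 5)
Your proof is correct. The paper actually states this proposition without proof (deferring to the cited references on nuclearity), and your argument is the standard one: closure of the nuclear objects under $\otimes$ and $(-)^{*}$ via Lemma~\ref{lem:properties-nuclear}, dualizability of each nuclear object from the characterisation lemma, and fullness to transport the coherence data, unit/counit and triangle identities into the subcategory, with the bookkeeping handled in the right order.
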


\begin{example}

\begin{enumerate}

\item If $R$ is a commutative ring, then the nuclear objects in the category of $R$-modules are the finitely-generated projective ones~\cite{Rowe1988}.

\item In a cartesian closed category, only the terminal object is nuclear. 

\item For an associative algebra $A$ over a commutative field $\Bbbk$, the category of right $A$-modules is {nuclear} in the (2-) category $LocPres_\Bbbk$ of locally presentable $\Bbbk$-linear categories and cocontinuous $\Bbbk$-linear functors, with respect to the Kelly-Deligne tensor product~\cite{BrandenburgChirvasituJohnsonFreyd2015}.

\item Let $C$ be a coassociative coalgebra over a commutative field $\Bbbk$. 
Then the category of right $C$-comodules is {nuclear} in $LocPres_\Bbbk$ if and only if it has {enough projectives} ($C$ is right semiperfect)~\cite{BrandenburgChirvasituJohnsonFreyd2015}, as it is the case, for example, when $C$ is finite dimensional. 

\item In the category of Banach spaces and bounded linear maps, the nuclear objects are the finite-dimensional Banach spaces~\cite{AbramskyBlutePanangaden1999,Rowe1988}.

\item The nuclear objects in the category of complete sup-lattices are the completely distributive lattices~\cite{HiggsRowe1989}. 

\end{enumerate}

\end{example}


\paragraph{Completely distributive cocomplete $\V$-categories.}

There are many equivalent formulations of the notion of  completely distributive cocomplete $\V$-category. For us, the most appropriate will be the following~\cite{RosebrughWood1994}:

\begin{definition}
A {\em completely distributive} $\V$-category%
\footnote{
Also know as a {\em totally continuous} $\V$-category.%
} %
 is a cocomplete $\V$-category $\A$ such that $\sup{\A}{}:\D\A\to \A$ has a $\V$-enriched left adjoint $\t_\A:\A \to \D\A$. 
\end{definition}

Completely distributive $\V$-categories have been studied in the past; main references are~\cite{LaiZhang06,PuZhang15,Stubbe2007a}. 
As the name suggests, for $\V=\two$ we recover the well-known completely distributive lattices (the left adjoint to $\sup{\A}{}$ mapping an element to the downset of those totally below it).  
We shall denote by $\vccd$ the category of completely distributive $\V$-categories and continuous and cocontinuous $\V$-functors, and by $\vccd_{\mathsf{sup}}$ the category of completely distributive $\V$-categories with cocontinuous $\V$-functors. 
Recall our earlier convention that all (cocomplete) $\V$-categories are assumed to be separated.

\begin{example}
For any $\V$-category $\A$, $\D\A$ is completely distributive, the left adjoint of $\sup{\D\A}{}$ being $\D\y_\A $~\cite{LaiZhang06,Stubbe2007a}.
In particular, for any set $X$, the $\V$-valued powerset $\V^X = [\mathsf{d}X,\V]$ is completely distributive. 
Taking $X$ to be a singleton shows that the quantale $\V$ is itself {\em completely distributive as a $\V$-category}, 
while $X=\emptyset$ produces the completely distributive terminal $\V$-category $\mathbb 1_\top$. 
Also, the unit $\V$-category $\mathbb 1$ is completely distributive.
\end{example}

\begin{remark}\label{rem:proj}
\begin{enumerate}

\item A cocomplete $\V$-category is completely distributive if and only if it is a projective object in $\vsup$~\cite{Stubbe2007a} with respect to the class of all epimorphisms. Epimorphisms in $\vsup$ are precisely the cocontinuous $\V$-functors which are epis in $\vcat$. They can alternatively be described as $\V$-functors with fully faithful right adjoints.

\item Complete distributivity of a cocomplete $\V$-category does not necessarily entail the complete distributivity of the underlying lattice. 
For example, $\V$ itself is always completely distributive as an $\V$-category~\cite{Stubbe2007a}, but not necessarily distributive as a lattice. 
However, there exists a positive result in this sense, due to~\cite{LaiZhang06}: every completely distributive $\V$-category $\A$ is completely distributive as a lattice if and only if $\V$ itself is a completely distributive lattice.
Actually, if the reader is interested in cocomplete $\V$-categories which are not completely distributive, there is a simple way of producing such examples: taking any complete sup-lattice $\A$ which {\em is not completely distributive}, like the diamond lattice $M_{3}$, and a quantale $\V$ which {\em is completely  distributive} as a lattice.
Then the tensor product of $\A$ and $\V$ in the category of complete sup-lattices is a cocomplete $\V$-category -- it is the free cocomplete $\V$-category over the complete sup-lattice $\A$~\cite{JoyalTierney1984}, but not completely distributive as a lattice~\cite{Shmuely1979}. 
Therefore this tensor product is neither $\V$-completely distributive.   
\end{enumerate}

\end{remark}


\begin{theorem}\label{thm:nuclear=ccd}
Let $\A$ be a cocomplete $\V$-category. The following are equivalent:
\begin{enumerate}
\item \label{vccd} $\A$ is a completely distributive $\V$-category.

\item \label{projective} $\A$ is a projective object in $\vsup$ with respect to (regular) epimorphisms.

\item \label{nuclear} $\A$ is a nuclear object in the $*$-autonomous category $\vsup$.

\end{enumerate}
\end{theorem}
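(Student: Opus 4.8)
The plan is the following. The equivalence \ref{vccd}~$\Leftrightarrow$~\ref{projective} has already been recorded in Remark~\ref{rem:proj}, following \cite{Stubbe2007a}, so it suffices to splice \ref{nuclear} into the circle; I would prove \ref{nuclear}~$\Rightarrow$~\ref{projective} and \ref{projective}~$\Rightarrow$~\ref{nuclear}, both of which then follow essentially formally from the machinery already set up.

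For \ref{nuclear}~$\Rightarrow$~\ref{projective}: if $\A$ is nuclear, condition~2 of the characterisation lemma for nuclear objects gives, naturally in $\B$, an isomorphism $\B\td\A^{*}\cong[\A,\B]=\vsup(\A,\B)$. Hence the endofunctor $\vsup(\A,-)$ of $\vsup$ is isomorphic to $-\td\A^{*}$, which, $\vsup$ being symmetric monoidal closed, is left adjoint to $[\A^{*},-]$ and therefore cocontinuous; in particular it preserves (regular) epimorphisms. By Remark~\ref{rem:proj} the (regular) epimorphisms in $\vsup$ are precisely the $\V$-functors with a fully faithful right adjoint, equivalently those surjective on objects. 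Thus for any (regular) epimorphism $e$ the $\V$-functor $\vsup(\A,e)$ is again surjective on objects, which is exactly the statement that $\A$ is projective with respect to (regular) epimorphisms.

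For \ref{projective}~$\Rightarrow$~\ref{nuclear} I would first isolate the key point: every free object $\V^{X}=\D(\mathsf{d}X)=[\mathsf{d}X,\V]$, with $X$ a set, is nuclear. The duality $\vsup\cong\vsup\op$ is the identity on underlying sets (it sends $\A$ to $\A\op$, with the same objects), so it carries the pointwise product in $\vsup$ to the pointwise coproduct: concretely $\coprod_{i}\A_{i}$ and $\prod_{i}\A_{i}$ have the same underlying $\V$-category, whence $\V^{X}\cong\coprod_{X}\V$. Combining this with cocontinuity of $\td$ in each variable and $\vsup(\V,-)\cong\id$, one gets $(\V^{X})^{*}=\vsup(\V^{X},\V)\cong\prod_{X}\V\cong\V^{X}$, then $[\V^{X},\B]=\vsup(\V^{X},\B)\cong\prod_{X}\B\cong\coprod_{X}\B$, and $\B\td(\V^{X})^{*}\cong\coprod_{X}(\B\td\V)\cong\coprod_{X}\B$; checking that the resulting isomorphism $\B\td(\V^{X})^{*}\cong[\V^{X},\B]$ is the canonical comparison map then shows, via condition~2 of the characterisation lemma, that $\V^{X}$ is nuclear. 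Now suppose $\A$ is projective. The canonical $\V$-functor $\coprod_{a\in\A}\V\to\A$, whose $a$-th coprojection is $v\mapsto v\ot a$, is surjective on objects, hence a (regular) epimorphism in $\vsup$; projectivity lifts $\id_{\A}$ along it, exhibiting $\A$ as a retract in $\vsup$ of a free object. Since retracts of nuclear objects are nuclear (Lemma~\ref{lem:properties-nuclear}), $\A$ is nuclear.

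The only genuinely non-formal ingredient is the nuclearity of the free objects $\V^{X}$, which rests on the biproduct phenomenon $\coprod\cong\prod$ in $\vsup$ --- a reflection of the self-duality of $\vsup$, exactly as for complete sup-lattices --- together with the bookkeeping needed to identify the abstract isomorphisms above with the canonical comparison maps; I expect this identification, rather than any deep obstruction, to be the main nuisance. An alternative and more symmetric treatment, not needed here, runs through the bicategory $\vdist$: each $\V$-category $X$ is ``dualizable'' there with dual $X\op$, the unit and counit being the identity distributor $X(-,-)$ and the triangle identities amounting to the unit laws of distributor composition (the co-Yoneda lemma); transporting this along the strong monoidal $2$-functor $\D\colon\vdist\to\vsup$ recovers the nuclearity of every $\D X$, in particular of the free objects $\V^{X}$.
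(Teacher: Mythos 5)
Your proposal is correct and follows the same skeleton as the paper: the equivalence \ref{vccd}$\Leftrightarrow$\ref{projective} is quoted from Stubbe; your \ref{nuclear}$\Rightarrow$\ref{projective} argument (use $\B\td\A^*\cong\vsup(\A,\B)$ to see that $\vsup(\A,-)$ preserves regular epimorphisms, then read off surjectivity) is essentially word for word the paper's Proposition~\ref{prop:nuclear}; and \ref{projective}$\Rightarrow$\ref{nuclear} reduces in both treatments to nuclearity of free objects plus closure of nuclear objects under retracts. The genuine difference is in how the free objects are handled. The paper's Lemma~\ref{lem:free} shows that \emph{every} $\D\X$ is nuclear, with dual $\D(\X\op)$ and explicit unit and counit given by the hom distributor $\X\op\ot\X\to\V$ and its left Kan extension --- your closing remark about transporting the duality between $\X$ and $\X\op$ in $\vdist$ along $\D$ is exactly this lemma in disguise --- and then cites Stubbe's result that projectives are retracts of free objects. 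You instead prove nuclearity only for the set-indexed free objects $\V^X\cong\coprod_X\V$, via the biproduct phenomenon $\coprod\cong\prod$ coming from the self-duality of $\vsup$, and you build the covering epimorphism $\coprod_{a\in\A}\V\to\A$ by hand; this is more elementary (no distributor calculus) at the cost of the ``bookkeeping'' you flag, namely identifying the composite isomorphism $\B\td(\V^X)^*\cong\coprod_X\B\cong\vsup(\V^X,\B)$ with the canonical comparison map. That step can in fact be bypassed: a natural-in-$\B$ isomorphism $\B\td(\V^X)^*\cong\vsup(\V^X,\B)$ already exhibits $-\td(\V^X)^*$ as right adjoint to $-\td\V^X$, which is condition 4 of the characterisation lemma (existence of a right dual), and for a dualizable object the canonical map is then automatically invertible. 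The only other point worth making explicit is that a surjective-on-objects cocontinuous $\V$-functor between separated cocomplete $\V$-categories has a fully faithful right adjoint (so that your $\coprod_{a\in\A}\V\to\A$ really is a regular epimorphism in the sense of Remark~\ref{rem:proj}); this is a one-line check using separatedness and the adjunction inequalities.
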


\begin{proof}
The proof will be split into several parts. First, the equivalence~\ref{vccd}$\Longleftrightarrow $\ref{projective} is due to~\cite{Stubbe2007a}, as mentioned earlied in Remark~\ref{rem:proj}. 
Next, the implication~\ref{projective}$\Longrightarrow $\ref{nuclear} will follow from the next Lemma~\ref{lem:free},~\cite[Proposition~3.3]{Stubbe2007a} and~\cite[Proposition~1.4]{Rowe1988}. 
Finally,~\ref{nuclear}$\Longrightarrow $\ref{projective} is the subsequent Proposition~\ref{prop:nuclear}. 
\end{proof}

\begin{lemma}\label{lem:free}
Free cocomplete $\V$-categories are nuclear in $\vsup$.
\end{lemma}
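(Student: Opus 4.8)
The plan is to show that a free cocomplete $\V$-category $\D\X$ is dualizable, with dual $\D(\X\op)$, by producing evaluation and coevaluation maps and checking the triangle identities. The natural candidate for the dual of $\D\X = [\X\op,\V]$ is $[\D\X,\V] = \vsup(\D\X,\V)$, and the first task is to identify this internal hom explicitly. Since $\vsup$ is $*$-autonomous with dualizing object $\V\op$ and $\D\X$ is a presheaf category, one expects $\vsup(\D\X,\V)\cong\D(\X\op)$; indeed a cocontinuous $\V$-functor $\D\X\to\V$ is determined by its restriction along the Yoneda embedding $\y_\X:\X\to\D\X$ (because $\y_\X$ is dense and $\D\X$ is the free cocompletion), giving $\vsup(\D\X,\V)\cong\vcat(\X,\V)=[\X,\V]=\U\X\cong\D(\X\op)$ (using that $\V$ is separated and the identification $[\X,\V]\op\cong[\X\op,\V]$ induced by $\V\cong\V\op$ only when $\V$ is involutive — more safely, one uses $\D\X^{*}=[\D\X,\V]$ directly and notes it is again a presheaf category, hence free, hence a $\D$-algebra). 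I would set $\A^{*}:=\D(\X\op)$ and keep track of the identification via the universal property of free algebras rather than any self-duality of $\V$.

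Next I would construct the structure maps. The coevaluation $\V=\one\to\D\X\td\D(\X\op)$ and evaluation $\D(\X\op)\td\D\X\to\V$ are cocontinuous $\V$-functors, so by the universal property $\vsup(\D\X\td\D(\X\op),\C)\cong\vbisup(\D\X\otimes\D(\X\op),\C)$ and by freeness of $\D\X$ (and $\D(\X\op)$) it suffices to specify them on representables: the evaluation should send a pair $(\X(-,x')\text{ in }\D(\X\op),\ \X(-,x)\text{ in }\D\X)$ to $\X(x,x')\in\V$, i.e. it is the left Kan extension along $\y_{\X\op}\otimes\y_\X$ of the $\V$-functor $(x',x)\mapsto\X(x,x')$, which is the hom $\V$-functor $\X\op\otimes\X\to\V$; the coevaluation picks out the identity distributor on $\X$, i.e. it corresponds under \eqref{eq:classif-dist} to $\y_\X$ itself, realised as an element of $\D(\X\otimes\X\op)\cong\D\X\td\D(\X\op)$. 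Concretely both maps come for free from the monoidal structure $\d_{2}$ of the monad $\D$ together with the co-Yoneda lemma $\phi\otimes\X=\phi$.

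Then I would verify the two triangle identities. Using that $i:\A\otimes\B\to\A\td\B$ is dense and point-separating with respect to $\vsup\to\vcat$ (the Proposition just above), it is enough to check each triangle identity on representables, where it reduces to the co-Yoneda / Yoneda lemma: the composite $\D\X\xrightarrow{\cong}\V\td\D\X\to(\D\X\td\D(\X\op))\td\D\X\cong\D\X\td(\D(\X\op)\td\D\X)\to\D\X\td\V\xrightarrow{\cong}\D\X$ sends $\X(-,x)$ to $\bigvee_{x'}\X(x',x)\otimes(\text{evaluation of }\X(-,x')\text{-component})$, which collapses to $\X(-,x)$ by $\X(-,-)\otimes\X=\X$; the other identity is symmetric. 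I expect the main obstacle to be purely bookkeeping: pinning down the canonical map $\B\otimes\A^{*}\to[\A,\B]$ of the ambient $*$-autonomous structure in terms of $\d_{2}$, $\mu$ and the inverter description \eqref{eq:td-as-inverter} of $\td$, so that ``the map I wrote down on representables'' is provably the \emph{canonical} one and not merely \emph{a} map with the right source and target. Once the representable-level computations are set up, everything reduces to the (co-)Yoneda lemma and the universal property of $\D$, with no genuine new content — the difficulty is entirely in making the diagram chases honest while invoking density of $i$ to cut them down to representables.
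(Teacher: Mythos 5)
Your proposal follows essentially the same route as the paper: identify the dual of $\D\X$ as $\D(\X\op)\cong\vsup(\D\X,\V)\cong\vcat(\X,\V)$, take the coevaluation to be (the element of $\D(\X\otimes\X\op)$ given by) the hom $\X\op\otimes\X\to\V$, take the evaluation to be the left Kan extension of the hom along the Yoneda embedding, and reduce the triangle identities to the (co-)Yoneda lemma. One small simplification you could make: no involutivity of $\V$ is needed to identify $\vcat(\X,\V)$ with $\D(\X\op)$, since $\D(\X\op)=[(\X\op)\op,\V]=[\X,\V]$ on the nose.
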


\begin{proof} 
If the cocomplete $\V$-category $\A$ is $\D\X$ for a $\V$-category $\X$, then its dual is $\A^* = \vsup(\A,\V) =\vsup(\D\X,\V)\cong \vcat(\X,\V) = \D(\X\op)$. The coevaluation map
\[
\V \to \A \td \A^* \cong \D\X \td \D(\X\op) \cong \D(\X \otimes\X\op)
\]
is provided by the $\V$-hom $\X \op \otimes\X \to \V$. 
The evaluation map 
\[
\A^* \td \A \cong \D(\X\op) \td \D\X \cong\D(\X\op \otimes \X) \to \V
\]
is the left Kan extension of the $\V$-category structure on $\X\op$ along the Yoneda embedding. It is easy to see that the usual triangle identities are satisfied, hence $\D\X$ is nuclear, with dual $\D(\X\op)$. 
\end{proof}

\begin{proposition}\label{prop:nuclear}
Any nuclear object of $\vsup$ is projective. 
\end{proposition}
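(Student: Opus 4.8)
The plan is to rewrite the representable functor $\vsup(\A,-)$ as a functor that visibly preserves the relevant epimorphisms, and to read off projectivity from this; nuclearity of $\A$ is used in exactly one place, to perform this rewriting.

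\textbf{Step 1: reduction via nuclearity.} Put $\A^{*}=\vsup(\A,\V)$, and for a cocomplete $\V$-category $\C$ write $[\A,\C]=\vsup(\A,\C)$ for the internal hom of $(\vsup,\td)$. Since $(\vsup,\td)$ is symmetric monoidal closed with unit $\V$, the set of $\vsup$-morphisms $\A\to\C$ is naturally isomorphic to the set of $\vsup$-morphisms $\V\to[\A,\C]$; and since $\V$ is the free cocomplete $\V$-category on a single object, the latter set is just $\ob[\A,\C]$, naturally in $\C$. Now invoke nuclearity of $\A$: the canonical comparison $\A^{*}\td\C\to[\A,\C]$ is an isomorphism for every $\C$. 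Combining, the hom-set $\vsup(\A,\C)$ is naturally isomorphic to $\ob(\A^{*}\td\C)$, and under this identification the map $\vsup(\A,q)$ induced by a morphism $q\colon\B\to\C$ corresponds to $\ob(\A^{*}\td q)$. Consequently it suffices to prove that $\A^{*}\td q$ is surjective on objects for every (regular) epimorphism $q$ of $\vsup$.

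\textbf{Step 2: $\A^{*}\td q$ is surjective on objects.} Let $q\colon\B\to\C$ be an epimorphism of $\vsup$. By Remark~\ref{rem:proj} it has a fully faithful right adjoint $r$, so $q\circ r=\id_{\C}$. The functor $\A^{*}\td-$ is a left adjoint (closedness of $\vsup$), hence cocontinuous, and so is $\A^{*}\td q$. Write $i_{\A^{*},\B}\colon\A^{*}\ot\B\to\A^{*}\td\B$ and $i_{\A^{*},\C}\colon\A^{*}\ot\C\to\A^{*}\td\C$ for the universal bimorphisms. By Remark~\ref{rem:conseq}(3), every object $\zeta$ of $\A^{*}\td\C$ is the weighted colimit of representables $\zeta=\bigvee_{(a^{*},c)}\zeta(a^{*},c)\ot i_{\A^{*},\C}(a^{*},c)$, and naturality of the universal bimorphism gives $(\A^{*}\td q)\circ i_{\A^{*},\B}=i_{\A^{*},\C}\circ(\id_{\A^{*}}\ot q)$. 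Since $c=q(rc)$, each representable satisfies $i_{\A^{*},\C}(a^{*},c)=(\A^{*}\td q)\bigl(i_{\A^{*},\B}(a^{*},rc)\bigr)$, so cocontinuity of $\A^{*}\td q$ yields
\[
\zeta=(\A^{*}\td q)\Bigl(\,\bigvee_{(a^{*},c)}\zeta(a^{*},c)\ot i_{\A^{*},\B}(a^{*},rc)\Bigr)
\]
Hence $\zeta$ lies in the image of $\ob(\A^{*}\td q)$. By Step~1, every $\vsup$-morphism $\A\to\C$ then factors through $q$; as $q$ was an arbitrary (regular) epimorphism, $\A$ is projective.

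\textbf{Main obstacle.} The crux is Step~1: for an arbitrary cocomplete $\V$-category $\A$ the internal hom $[\A,-]$ does not preserve colimits (in particular not epimorphisms), so the argument collapses without the nuclear isomorphism $[\A,-]\cong\A^{*}\td-$; this is the sole but essential use of the hypothesis. The remaining care is bookkeeping in Step~2: one must carry the explicit ``colimit of representables'' presentation of the objects of $\A^{*}\td\C$ together with the naturality of the universal bimorphism $i$ (both furnished by Remark~\ref{rem:conseq}), and must check that the natural isomorphism of Step~1 identifies $\vsup(\A,q)$ with $\ob(\A^{*}\td q)$, so that surjectivity genuinely transfers.
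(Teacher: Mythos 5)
Your proof is correct and follows essentially the same route as the paper's: nuclearity turns $\vsup(\A,-)$ into $\A^{*}\td-$, which preserves (regular) epimorphisms since $\td$ is cocontinuous, and the fully faithful right adjoint of the given epimorphism supplies the lift. The only difference is cosmetic: where the paper extracts the factorisation from the fully faithful right adjoint of $\vsup(\A,e)$ and cites Stubbe's Lemma~3.4, you verify surjectivity on objects by hand via the colimit-of-representables presentation, which is a self-contained but equivalent way of finishing.
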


\begin{proof}
That nuclear implies projective in $\vsup$ can be seen as in~\cite{RosebrughWood1994}, using the description of (regular) epimorphisms in $\vsup$, and the equivalence between projective objects in $\vsup$ and $\vccd$, both due to~\cite{Stubbe2007a}. 
Although no originality is claimed here, we include below the details for completeness.
Let $\A$ in $\vsup$ be a nuclear object. 
Then $\B \td \A^* \cong \vsup(\A,\B)$ holds for all $\B$ in $\vsup$. 
Consider $f:\A \to \B$ and $e:\C \to \B$ a (regular) epimorphism in $\vsup$ (a dense cocontinuous $\V$-functor).
As the tensor $-\td -$ preserves colimits and epimorphisms coincide with regular epimorphisms in $\vsup$, $e \td \A^*: \C \td \A^* \to \B \td \A^*$ is again epi, hence $g=\vsup(\A,e):\vsup(\A,\C) \to \vsup(\A,\B)$ is epimorphism in $\vsup$. 
Let $h$ be the (fully faithful) right adjoint of $g$; then $f:\A \to \B$ factorises through $e:\C \to \B$ via $h(f)$.
Now~\cite[Lemma~3.4]{Stubbe2007a} shows that this is indeed the desired factorisation. 
\end{proof}


\paragraph{Tensor product of completely distributive $\V$-categories in $\vsup$.} 
Lemma~\ref{lem:properties-nuclear} and Theorem~\ref{thm:nuclear=ccd} immediately show that the tensor product of two completely distributive cocomplete $\V$-categories is again completely distributive. 
However, we believe it is instructive to see also a direct proof generalising~\cite{KenneyWood2010}, using the explicit description of $-\td-$ obtained in the previous section. 

\medskip

Recall that for a completely distributive $\V$-category $\A$ we denoted by $\t_\A:\A \to \D\A$ the left adjoint of $\sup{\A}{}$. 
This is the $\V$-valued analogue of the ``totally below downset''. 
Denote by $\Downarrow:\A \nto\A$ the corresponding distributor, $\Downarrow(a,a') = \t_\A(a')(a)$~\cite{Stubbe2007a}.

\begin{proposition}
For $\A$, $\B$ completely distributive $\V$-categories, the coinverter arrow $q:\D(\A \otimes\B) \to \A \td \B$ of~\eqref{eq:D-alg-as-coinverter} is 
\[
q(\xi)(a,b) = \D(\A \otimes\B)(\d_2(\t_\A(a), \t_\B(b)),\xi)
\]
\end{proposition}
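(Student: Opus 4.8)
The plan is to identify the coinverter map $q$ explicitly by exploiting the fact that, when $\A$ and $\B$ are completely distributive, the relevant adjunctions split and the coinverter can be computed pointwise. Recall from Theorem~\ref{thm:nuclear=ccd} (and Lemma~\ref{lem:free}) that $\D(\A \otimes \B)$ is a free cocomplete $\V$-category, and that $q$ is the reflector onto $\A \td \B$, which sits inside $\D(\A \otimes \B)$ as the inverter of $\D(\y_\A \otimes \y_\B)$ and $\D_\forall(\y_\A \otimes \y_\B)$ by~\eqref{eq:td-as-inverter}. The candidate formula asserts that $q(\xi)$ is the presheaf on $\A \otimes \B$ whose value at $(a,b)$ is $\D(\A \otimes \B)(\d_2(\t_\A(a),\t_\B(b)),\xi)$, so the first step is to check that this candidate does land in $\A \td \B$, i.e.\ that it satisfies the $G$-ideal condition~\eqref{eq:G-ideal}; and the second step is to check that the assignment $\xi \mapsto q(\xi)$ so defined is indeed left adjoint (equivalently, a reflection, equivalently a cocontinuous retraction with the right universal property) to the inclusion $j$.

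First I would exhibit the natural transformation $\t_{\A \otimes \B} := \d_2 \circ (\t_\A \otimes \t_\B) : \A \otimes \B \to \D(\A \otimes \B)$ and observe, using that $\t_\A \dashv \sup{\A}{}$ and $\t_\B \dashv \sup{\B}{}$ together with the explicit formula $\d_2(\phi,\psi)(x,y) = \phi(x) \otimes \psi(y)$, that it is a section of $\sup{\A \otimes \B}{}$ in a suitable sense, and that the associated distributor is $\Downarrow_\A \otimes \Downarrow_\B$ on $\A \otimes \B$. Then the candidate map rewrites as $q(\xi)(a,b) = \D(\A \otimes \B)((\Downarrow_\A \otimes \Downarrow_\B)(-,(a,b)), \xi)$, i.e.\ $q(\xi) = \xi \otimes (\Downarrow_\A \otimes \Downarrow_\B)$ as a composite of distributors, which is manifestly a cocontinuous operation in $\xi$. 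The $G$-ideal condition~\eqref{eq:G-ideal} for $q(\xi)$ then reads $\D(\A \otimes \B)(\d_2(\phi,\psi), q(\xi)) = q(\xi)(\sup{\A}{\phi},\sup{\B}{\psi})$, and unravelling both sides via the co-Yoneda lemma and the adjunctions $\t_\A \dashv \sup{\A}{}$, $\t_\B \dashv \sup{\B}{}$, this reduces to the identity $\t_\A(\sup{\A}{\phi}) \otimes \t_\B(\sup{\B}{\psi})$ absorbing against $\phi \otimes \psi$ correctly --- precisely the statement that $\Downarrow_\A$ is a comonad-like idempotent interpolating distributor (the ``interpolation property'' of totally-below in a completely distributive $\V$-category, cf.~\cite{Stubbe2007a}).

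Having shown $q(\xi) \in \A \td \B$, I would then verify the universal property: that $\xi \mapsto q(\xi)$ is left adjoint to $j$. Since $j$ is fully faithful (it is the inverter inclusion) and $\A \td \B$ is separated, it suffices to check $q \circ j = \id_{\A \td \B}$ and that the unit $\xi \to j(q(\xi))$ in $\D(\A \otimes \B)$ is a genuine $\V$-natural transformation, i.e.\ that $e \le \D(\A \otimes \B)(\xi, q(\xi))$ always, with equality when $\xi$ is already a $G$-ideal. The inequality $\xi \le q(\xi)$ follows from $e \le \t_\A(a)(a') \otimes \t_\B(b)(b')$ evaluated appropriately --- actually it follows from $\Downarrow_\A \le \A$ giving $q(\xi) = \xi \otimes (\Downarrow_\A \otimes \Downarrow_\B) \le \xi \otimes (\A \otimes \B)$... wait, that is the wrong direction, so here one instead uses that $\xi = \xi \otimes (\A \otimes \B) \le \xi \otimes$ (something), or more carefully that for $\xi$ a $G$-ideal one has $\xi(\sup{\A}{\phi},\sup{\B}{\psi}) = \D(\A \otimes \B)(\d_2(\phi,\psi),\xi)$, and specialising $\phi = \t_\A(a)$, $\psi = \t_\B(b)$ together with $\sup{\A}{\t_\A(a)} = a$, $\sup{\B}{\t_\B(b)} = b$ gives $\xi(a,b) = \D(\A \otimes \B)(\d_2(\t_\A(a),\t_\B(b)),\xi) = q(\xi)(a,b)$, i.e.\ $q \circ j = \id$ on the nose. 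Combined with the fact that $q$ as defined is cocontinuous and that $\A \td \B$ is the reflective subcategory of $G$-ideals, this pins down $q$ as the coinverter arrow by uniqueness of reflectors.

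I expect the main obstacle to be the $G$-ideal verification for $q(\xi)$, i.e.\ showing the candidate genuinely lands in $\A \td \B$: this is where the completely distributive hypothesis is essentially used, and it hinges on the interpolation property of the totally-below distributor $\Downarrow$ (that $\Downarrow \otimes \Downarrow = \Downarrow$, equivalently $\t_\A$ is fully faithful as a left adjoint section making $\D\A$ split off $\A$ idempotently), a fact available from~\cite{Stubbe2007a} but requiring careful bookkeeping of the two-variable distributor composition and the interaction of $\d_2$ with $\t_\A \otimes \t_\B$. The remaining steps ($q \circ j = \id$, cocontinuity, and matching the universal property of the coinverter) are then largely formal manipulations with adjoint distributors and the co-Yoneda lemma.
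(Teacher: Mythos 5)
Your overall strategy coincides with the paper's: define the candidate $q_1(\xi)(a,b)=\D(\A\otimes\B)(\d_2(\t_\A(a),\t_\B(b)),\xi)$, show it lands in $\A\td\B$ using the interpolation property $\Downarrow_\A=\Downarrow_\A\otimes\Downarrow_\A$ together with $\Downarrow_\A(-,\sup{\A}{\phi})\le\phi$, and then identify it with the reflector by exhibiting the adjunction $q_1\dashv j$, the counit $q_1\circ j=\id$ coming from the $G$-ideal condition~\eqref{eq:G-ideal} specialised at $\phi=\t_\A(a)$, $\psi=\t_\B(b)$ together with $\sup{\A}{\t_\A(a)}=a$. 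All of this matches the paper's proof.

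There is, however, a genuine gap at the unit of the adjunction. You need $e\le\D(\A\otimes\B)(\theta,q_1(\theta))$ for \emph{every} $\theta\in\D(\A\otimes\B)$, not only for $G$-ideals, and this is exactly the step you leave unresolved (``wait, that is the wrong direction''). The source of the trouble is your rewriting of $q_1(\xi)$ as the distributor composite $\xi\otimes(\Downarrow_\A\otimes\Downarrow_\B)$: the defining formula is $\bigwedge_{x,y}[\Downarrow_\A(x,a)\otimes\Downarrow_\B(y,b),\xi(x,y)]$, a right lifting of $\xi$ through $\Downarrow_\A\otimes\Downarrow_\B$, not a composite (which would be a join of tensors). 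Consequently $q_1$ is \emph{contravariant} in the weight, so the containment $\Downarrow\le\A$ (equivalently $\d_2(\t_\A,\t_\B)\le\d_2(\y_\A,\y_\B)$, since totally below implies below) yields the inequality in the direction you actually need: $\theta=\D(\A\otimes\B)(\d_2(\y_\A,\y_\B)(-,-),\theta)\le\D(\A\otimes\B)(\d_2(\t_\A,\t_\B)(-,-),\theta)=q_1(\theta)$, the first equality being Yoneda. This is precisely how the paper closes the argument. The same misidentification also undercuts your claim that $q_1$ is ``manifestly cocontinuous'' in $\xi$ --- cocontinuity is not manifest from a right lifting, but it is also not needed once the adjunction $q_1\dashv j$ has been established directly.
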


\begin{proof}
Denote for now by $q_1$ the $\V$-functor given by the expression above. 
We will show that the image of $q_1$ lives in $\A \td \B$ and that $q_1$ indeed provides the left adjoint to the embedding $j:\A\td \B \to \D(\A \otimes\B)$, being thus isomorphic to $q$.

First, observe that for any $\phi\in \D\A$, 
\[
\begin{array}{lllll}
\Downarrow_\A (-,\sup{\A}{\phi}) 
&=&
\Downarrow_\A(-,-)\otimes \Downarrow_\A(-,\sup{\A}{\phi})
&\le &
\Downarrow_\A (-,-)\otimes \phi
\end{array}
\]
holds, with the last inequality being a consequence of the adjunction $\t_\A \dashv \sup{\A}{}$. 

Now, for any $\phi\in \D\A$, $\psi\in \D\B$ we have
\[
\begin{array}{lllll}
\D(\A \otimes\B)(\d_2(\phi, \psi), q_1(\xi)) 
&=&
\bigwedge_{a,b} [\phi(a)\otimes\psi(b), \bigwedge_{x,y} [\Downarrow_\A(x,a) \otimes\Downarrow_\B(y,b),\xi(x,y)]]
\\[10pt]
& = & 
\bigwedge_{x,y} [ \bigvee_a \phi(a) \otimes\Downarrow_\A(x,a)\otimes\bigvee_b \psi(b) \otimes\Downarrow_\B(y,b),\xi(x,y)]
\\[10pt]
&\le &
\bigwedge_{x,y} [ \Downarrow_\A(x,\sup{\A}{\phi})\otimes\Downarrow_\B(y,\sup{\B}{\psi}),\xi(x,y)]
\\[10pt]
&=&
q_1(\xi)(\sup{\A}{\phi},\sup{\B}{\psi})
\end{array}
\]
Therefore, by~\eqref{eq:G-ideal}, $q(\xi)$ is in $\A \td\B$. 

Next, observe that $\id_{\D(\A \otimes\B)}\le q_1$ holds:
\[
\begin{array}{lllll}
\xi 
&=& 
\D(\A \otimes\B)(\y_{\A \otimes\B}(-,-),\xi) 
&=&
\D(\A \otimes\B)(d_2(\y_\A,\y_\B)(-,-),\xi)
\\[10pt]
&\le&
\D(\A \otimes\B)(d_2(\t_\A,\t_\B)(-,-),\xi)
&=&
q_1(\xi)
\end{array}
\] 
In particular, if $\xi\in \A\td \B$ and $\theta \in \D(\A \otimes\B)$ satisfy $q_1(\theta)\le \xi$, then also $\theta \le q_1(\theta) \le \xi=j(\xi)$ holds in $\D(\A \otimes\B)$.

Finally, consider again $\xi\in \A\td \B$ and $\theta \in \D(\A \otimes\B)$, but such that $\theta \le \xi$ in $\D(\A \otimes\B)$. 
Then 
\[
\begin{array}{lllllll}
q_1(\theta) 
&=& 
\D(\A \otimes\B)(\d_2(\t_\A(-), \t_\B(-)),\theta) 
&\le &
\D(\A \otimes\B)(\d_2(\t_\A(-), \t_\B(-)),\xi) 
\\[10pt]
&\le &
\xi(\sup{\A}{\t_\A(-)}, \sup{\B}{\t_\B(-)})
&=&
\xi
\end{array}
\] 
using that $\A$ and $\B$ are completely distributive. 
\end{proof}

\begin{corollary}
The monoidal structure of $\vsup$ restricts to $\vccd_{sup}$.
\end{corollary}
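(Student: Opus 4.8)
The plan is to show that the full subcategory $\vccd_{\mathsf{sup}}\hookrightarrow\vsup$ is a monoidal subcategory. Since its morphisms are exactly those of $\vsup$ between completely distributive objects, and since the associator, unitors and symmetry are simply inherited from $\vsup$, it suffices to check two things: that the unit $\V$ is completely distributive, and that $\A\td\B$ is completely distributive whenever $\A$ and $\B$ are. The first point is already on record, as $\V$ is a free cocompletion, hence completely distributive; so the real content is the stability of complete distributivity under $\td$.

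The quickest argument is via nuclearity. If $\A$ and $\B$ are completely distributive, then by Theorem~\ref{thm:nuclear=ccd} they are nuclear in the $*$-autonomous category $\vsup$; by Lemma~\ref{lem:properties-nuclear} the class of nuclear objects is closed under $\td$, so $\A\td\B$ is nuclear, and Theorem~\ref{thm:nuclear=ccd} once more gives that $\A\td\B$ is completely distributive. This actually yields slightly more: as the nuclear objects of any symmetric monoidal closed category form a compact closed category, $\vccd_{\mathsf{sup}}$ inherits a compact closed structure from $\vsup$.

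For completeness I would also give the hands-on argument announced just before the preceding proposition, generalising~\cite{KenneyWood2010}. It rests on the remark that $\A$ is completely distributive exactly when it is a retract of a free cocompletion \emph{in $\vsup$}: both $\t_\A$ and $\sup{\A}{}$ are cocontinuous (being adjoints) and $\sup{\A}{}\circ\t_\A=\id_\A$, so $\A$ is a retract of $\D\A$; conversely, a retract in $\vsup$ of a free --- hence projective (Remark~\ref{rem:proj}) --- $\V$-category is projective, hence completely distributive by Theorem~\ref{thm:nuclear=ccd}. Functoriality of $\td$ then exhibits $\A\td\B$ as a retract of $\D\A\td\D\B\cong\D(\A\otimes\B)$, the isomorphism being the strong monoidality of the free functor $\vcat\to\vsup$ used in Lemma~\ref{lem:3x3}, and $\D(\A\otimes\B)$ is free, so we conclude as before; an explicit left adjoint $\t_{\A\td\B}$ of $\sup{\A\td\B}{}$ is then obtained by transporting $\D\y_{\A\otimes\B}$ along this retraction and its section, with the explicit formula for the reflector $q$ from the preceding proposition serving as the corresponding computation on presheaves.

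I do not expect a genuine obstacle: the statement is a corollary of material already established. The only points requiring attention are that the maps entering the retract presentation of complete distributivity really do live in $\vsup$ (true, since $\t_\A$ and $\sup{\A}{}$ are adjoints, hence cocontinuous), and that the identification of completely distributive, projective and nuclear objects from Theorem~\ref{thm:nuclear=ccd} is invoked in the right place so that Lemma~\ref{lem:properties-nuclear} applies; should one prefer the concrete route, the only actual computation is the verification of the adjunction $\t_{\A\td\B}\dashv\sup{\A\td\B}{}$, which in the locally ordered setting reduces to two inequalities.
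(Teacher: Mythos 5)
Your proof is correct. Your first route (complete distributivity $=$ nuclearity by Theorem~\ref{thm:nuclear=ccd}, closure of nuclear objects under $\td$ by Lemma~\ref{lem:properties-nuclear}) is precisely the ``immediate'' argument the paper acknowledges in the paragraph preceding the corollary and then deliberately sets aside in favour of a direct proof; so on its own it already settles the statement. Your second, hands-on route differs from the paper's in how the retraction is produced: you apply the functor $-\td-$ to the splitting $\sup{\A}{}\circ\t_\A=\id_\A$ (both maps cocontinuous, being left adjoints) to exhibit $\A\td\B$ directly as a retract of $\D\A\td\D\B\cong\D(\A\otimes\B)$ in $\vsup$, whereas the paper obtains the same retract by feeding the five-term fully faithful adjoint string $\D(\t_\A\otimes\t_\B)\dashv\cdots\dashv\D_\forall(\y_\A\otimes\y_\B)$ into the Rosebrugh--Wood inverter machinery, producing a new adjoint string $\iota\dashv q\dashv j$. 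Your version is shorter and needs only functoriality of $\td$ plus the fact that retracts of projectives (equivalently, of free cocompletions) are projective; the paper's version buys extra structure, namely that the reflector $q$ is simultaneously a left and a right adjoint, which ties in with the explicit formula for $q$ in the preceding proposition. The only points worth making explicit in your write-up are that $\sup{\A}{}\circ\t_\A=\id_\A$ (an equality, not just an isomorphism, by separatedness and full faithfulness of $\t_\A$ in the adjoint string $\t_\A\dashv\sup{\A}{}\dashv\y_\A$), and the identification $\D\A\td\D\B\cong\D(\A\otimes\B)$, which is the strong monoidality of the free functor already used in Lemma~\ref{lem:3x3}; with those in place your argument goes through without a gap.
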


\begin{proof}
The unit for $\td$ is $\D\one =\V$, which is completely distributive by~\cite{LaiZhang06}. 
That the tensor product of two completely distributive lattices $\A$ and $\B$ is again completely distributive can now be seen using~\cite{RosebrughWood1995}: the fully faithful adjoint string 
\[
\D(\t_\A \otimes\t_\B) \dashv \D(\sup{\A}{}\otimes\sup{\B}{}) \dashv \D(\y_\A \otimes\y_\B) \dashv \D_{-1}(\y_\A \otimes\y_\B) \dashv \D_\forall (\y_\A \otimes\y_\B)
\]
induces another fully faithful adjoint string in $\vcat$ involving the inverter of $\D(\t_\A \otimes\t_\B) \le \D(\y_\A \otimes\y_\B)$, that we shall denote by $\iota$, and the inverter of $\D(\y_\A \otimes\y_\B) \le \D_\forall (\y_\A \otimes\y_\B)$, namely the embedding $j:\A\td \B \to \D(\A\otimes\B)$ described by~\eqref{eq:td-as-inverter}. 
More precisely, this newly adjoint string writes as $\iota\dashv q\dashv j$, with $q$ as earlier, both left and (now) right adjoint:
\[
\xymatrix@C=60pt{
\A \td \B 
\ar@<+3.5ex>[r]^\iota
\ar@{}@<2ex>[r]|\perp
\ar@{<-}[r]|q
\ar@{}@<-2ex>[r]|\perp
\ar@<-3.5ex>[r]_j
& 
\D(\A \otimes\B) 
\ar@<6.3ex>[rr]^{\D(\t_\A \otimes\t_\B)}
\ar@{}@<5ex>[rr]|\perp
\ar@<3.2ex>@{<-}[rr]|{\D(\sup{\A}{}\otimes\sup{\B}{})}
\ar@{}@<+1.9ex>[rr]|\perp
\ar[rr]|{\D(\y_\A \otimes\y_\B)}
\ar@{}@<-1.9ex>[rr]|\perp
\ar@<-3.2ex>@{<-}[rr]|{\D_{-1}(\y_\A \otimes\y_\B)}
\ar@{}@<-5ex>[rr]|\perp
\ar@<-6.3ex>[rr]_{\D_\forall(\y_\A \otimes\y_\B)}
&&
\D(\D\A \otimes\D\B) 
}
\]
This exhibits $\A \td \B$ as a retract in $\vsup$ of the completely distributive $\D(\A\otimes\B)$, hence it is itself completely distributive. Therefore the monoidal structure of $\vsup$ restricts to $\vccd_{sup}$ turning it into a compact closed category. 
\end{proof}


\section*{Acknowledgments}
The author would like to thank the organisers of the conference {\sf Category Theory at Work in Computational Mathematics and Theoretical Informatics CATMI 2023} for the invitation to present her work and to submit it to the conference proceedings volume. Thanks go also to the anonymous referees for several useful suggestions on the first version of this paper.

%
%
\bibliographystyle{alpha}
\bibliography{ABbib}


\end{document}